\documentclass[11pt]{article}
\usepackage{amsmath, amssymb,verbatim,appendix}
\usepackage[mathscr]{eucal}
\usepackage{amscd}
\usepackage{amsthm}
\usepackage{stmaryrd}
\usepackage{enumerate}
\usepackage{comment}
\usepackage[latin1]{inputenc}
\usepackage{tikz}
\usetikzlibrary{shapes,arrows}
\usepackage{cite}
\usepackage{url}
\usepackage[margin=1in]{geometry}

\AtBeginDocument{%
   \def\MR#1{}
}

\newtheorem{theorem}{Theorem}
\newtheorem{lemma}{Lemma}

\newtheorem{question}{Question}

\newtheorem{problem}{Problem}
\newtheorem{observation}{Observation}

\newtheorem{claim}{Claim}

\newtheorem*{theorem2}{Theorem 2}
\newtheorem*{theorem3}{Theorem 3}
\newtheorem*{theorem4}{Theorem 4}

\newtheorem*{lemma1}{Lemma 1}

\theoremstyle{definition}
\newtheorem{example}{Example}
\newtheorem{definition}{Definition}

\newcommand{\abar}{\bar{a}}
\newcommand{\bbar}{\bar{b}}
\newcommand{\cbar}{\bar{c}}

\newcommand{\ubar}{\bar{u}}
\newcommand{\vbar}{\bar{v}}

\newcommand{\xbar}{\bar{x}}
\newcommand{\ybar}{\bar{y}}
\newcommand{\zbar}{\bar{z}}

\newcommand{\Cbar}{\bar{C}}

\newcommand{\VC}{\textnormal{VC}}

\newcommand{\calL}{\mathcal{L}}
\newcommand{\calF}{\mathcal{F}}
\newcommand{\calH}{\mathcal{H}}

\newcommand{\calM}{\mathcal{M}}

\newcommand{\calB}{\mathcal{B}}

\newcommand{\calP}{\mathcal{P}}

\title{
$\VC_{\ell}$-dimension and the jump to the fastest speed of a hereditary $\calL$-property
}

\author{
C. Terry}
\date{}

\begin{document}

\maketitle

\begin{abstract}
In this paper we investigate a connection between the growth rates of certain classes of finite structures and a generalization of $\VC$-dimension called $\VC_{\ell}$-dimension. Let $\calL$ be a finite relational language with maximum arity $r$.  A hereditary $\calL$-property is a class of finite $\calL$-structures closed under isomorphism and substructures.  The \emph{speed} of a hereditary $\calL$-property $\calH$ is the function which sends $n$ to $|\calH_n|$, where $\calH_n$ is the set of elements of $\calH$ with universe $\{1,\ldots, n\}$. It was previously known there exists a gap between the fastest possible speed of a hereditary $\calL$-property and all lower speeds, namely between the speeds $2^{\Theta(n^r)}$ and $2^{o(n^r)}$.  We strengthen this gap by showing that  for any hereditary $\calL$-property $\calH$, either $|\calH_n|=2^{\Theta(n^r)}$ or there is $\epsilon>0$ such that for all large enough $n$, $|\calH_n|\leq 2^{n^{r-\epsilon}}$.  This improves what was previously known about this gap when $r\geq 3$.  Further, we show this gap can be characterized in terms of $\VC_{\ell}$-dimension, therefore drawing a connection between this finite counting problem and the model theoretic dividing line known as $\ell$-dependence.

\end{abstract}

\section{Introduction}


One of the major themes in model theory is the search for dividing lines among first order theories.  The study of dividing lines was first developed by Shelah \cite{classification}.  One of the main goals of this work was to understand the function $I(T,\kappa)$, which, given an input theory $T$ and a cardinal $\kappa$, outputs the number of non-isomorphic models of $T$ of size $\kappa$. Therefore, the discovery of dividing lines was fundamentally related to infinitary counting problems.  Further, many dividing lines can be characterized by a counting dichotomy, including stability, NIP, VC-minimality, and $\ell$-dependence.   These facts show us that model theoretic dividing lines are closely related to counting problems in the infinite setting.

There has been substantial work on understanding dichotomies in finitary counting problems in the field of combinatorics, particularly in the setting of graphs.  A \emph{hereditary graph property} is a class of finite graphs $\calH$, which is closed under isomorphism and induced subgraphs.  Given a hereditary graph property, $\calH$, the \emph{speed} of $\calH$ is the function $n \mapsto |\calH_n|$, where $\calH_n$ denotes the set of elements in $\calH$ with vertex set $[n]:=\{1,\ldots, n\}$.  The possible speeds of hereditary graph properties are well understood.  In particular, their speeds fall into discrete growth classes, as summarized in the following theorem.

\begin{theorem}\label{gg}
Suppose $\calH$ is a hereditary graph property.  Then one of the following holds, where $\calB_n\sim (n/\log n)^n$ denotes the $n$-th Bell number.
\begin{enumerate}
\item There are rational polynomials $p_0,\ldots, p_k$ such that for sufficiently large $n$, $|\calH_n|=\sum_{i=0}^k p_i(n)i^n$,
\item There exists an integer $k>1$ such that $|\calH_n|=n^{(1-\frac{1}{k}+o(1))n}$, 
\item There is an $\epsilon>0$ such that for sufficiently large $n$, $\calB_n \leq |\calH_n|\leq 2^{n^{2-\epsilon}}$,
\item There exists an integer $k> 1$ such that $|\calH_n|=2^{(1-\frac{1}{k}+o(1))n^2/2}$.
\end{enumerate}
\end{theorem}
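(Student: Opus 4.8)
The plan is to attach to a hereditary graph property $\calH$ a short list of combinatorial invariants --- roughly, the ``largest generic substructures'' of a few prescribed shapes that $\calH$ contains --- and to show that this list pins down which of the four ranges $\calH$ occupies, by establishing matching upper and lower bounds in each case. I would argue from the top down, peeling off one range at a time, since the structures that govern a faster range subsume those governing the slower ones. Historically the statement is a synthesis of work of Scheinerman--Zito, Alekseev, Bollob\'as--Thomason and Balogh--Bollob\'as--Weinreich, and I would reconstruct it in that order.

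The first step is case~(4). Following Alekseev and Bollob\'as--Thomason, let $k(\calH)$ be the largest $k$ for which there is some $0\le j\le k$ such that $\calH$ contains \emph{every} graph whose vertex set partitions into $j$ cliques and $k-j$ independent sets (allowing $k(\calH)=\infty$). If $k=k(\calH)\ge 2$, then blowing up the witnessing $k$-part shape on $n$ vertices already gives $|\calH_n|\ge 2^{(1-1/k+o(1))n^2/2}$, and for the matching upper bound I would use a Ramsey-theoretic covering argument (in modern terms, the regularity method) showing that every member of $\calH_n$ is pinned down by a near-partition of this shape together with $o(n^2)$ exceptional adjacencies; the value $k=\infty$ (e.g.\ all graphs) is the limiting instance, giving $2^{(1+o(1))n^2/2}$. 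If instead $k(\calH)\le 1$, the same covering argument already forces $|\calH_n|=2^{o(n^2)}$, and to reach case~(3) I would push further with a stability/supersaturation analysis: the failure of $\calH$ to contain the two-part blow-up shapes should cost a genuine polynomial factor in the exponent, so that $|\calH_n|\le 2^{n^{2-\epsilon}}$ for some $\epsilon>0$.

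Assuming now $|\calH_n|=2^{o(n^2)}$, the next step separates cases~(2) and~(3) via the jump to the Bell number. If $\calH$ is not contained in any of a few explicit ``bounded-width'' properties (graphs all of whose components, or co-components, have bounded size, and the like), then partitions of $[n]$ realized inside $\calH$ as cluster graphs or complete multipartite graphs give $|\calH_n|\ge\calB_n$, which with the upper bound from the previous step places $\calH$ in case~(3). Otherwise $\calH$ lies in such a bounded-width property, Ramsey's theorem caps the ``local complexity'' of its members, each member is assembled from boundedly many block-types, and counting such assemblies yields $|\calH_n|=n^{(1-1/k+o(1))n}$ for a positive integer $k$; the same bookkeeping shows there is no speed strictly between consecutive values $n^{(1-1/k)n}$. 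Finally, if even these factorial-type structures are absent, every member of $\calH_n$ is determined by a bounded amount of data modulo a bounded automorphism group, and an orbit-counting / generating-function argument in the style of Scheinerman--Zito gives that $|\calH_n|$ is eventually exactly $\sum_{i=0}^{k}p_i(n)i^n$ with $p_i\in\mathbb{Q}[x]$ --- case~(1).

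I expect the main obstacle to be the \emph{exactness} of the jumps rather than the coarse four-range picture: that nothing lies strictly between $2^{n^{2-\epsilon}}$ and $2^{\Theta(n^2)}$, nothing between the largest factorial speed and $\calB_n$, and nothing between successive factorial speeds. Each of these needs a stability dichotomy --- either $\calH$ already contains the generic structure of the next range up, so the speed jumps, or $\calH$ is quantitatively far from containing it, which must then be converted into a super-polynomial (or polynomial-in-the-exponent) saving by supersaturation and Ramsey-type counting. Making these savings uniform over all of $\calH$, instead of extracting them one forbidden configuration at a time, is where the real work lies; it is also the step whose analogue the present paper must lift from graphs to $r$-ary $\calL$-structures.
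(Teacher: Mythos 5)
The paper does not prove Theorem~\ref{gg}; it is quoted as a background survey result, with the proof delegated entirely to the cited literature (Balogh--Bollob\'as--Weinreich for cases (1), (2), and the $\calB_n$ lower bound of (3); Balogh--Bollob\'as--Morris and Bollob\'as--Thomason for the gap between (3) and (4); Bollob\'as--Thomason and Alekseev for the structure within (4)). There is therefore no ``paper's own proof'' to compare against, and any fair assessment of your submission has to start from that fact.

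What you have written is a creditable reconstruction of the \emph{architecture} of those references: the Alekseev--Bollob\'as--Thomason colouring invariant $k(\calH)$ controlling case (4), the regularity/Ramsey covering argument for the matching upper bound, the stability and supersaturation step needed to push from $2^{o(n^2)}$ down to $2^{n^{2-\epsilon}}$, the Bell-number threshold separating (2) from (3), and the Scheinerman--Zito-style orbit/generating-function count for the eventually-polynomial-in-$i^n$ regime of case (1). But you should be aware that you have written a plan, not a proof. Your own last paragraph identifies exactly where the substance lives --- the exactness of the jumps: nothing strictly between $2^{n^{2-\epsilon}}$ and $2^{\Theta(n^2)}$, nothing between the top factorial speed and $\calB_n$, nothing between consecutive factorial speeds --- and then defers all of it to ``stability dichotomies'' and ``supersaturation'' without giving any of the actual arguments. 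Each of those steps is a substantial theorem in the original papers (the $(3)$--$(4)$ gap alone is the main result of Balogh--Bollob\'as--Morris), so the sketch cannot be treated as a proof of Theorem~\ref{gg}. One small attribution slip: the exact form $\sum_i p_i(n)i^n$ in case (1), and the precise $n^{(1-1/k+o(1))n}$ ladder in case (2), are due to Balogh--Bollob\'as--Weinreich; Scheinerman--Zito established only the coarser bounded/polynomial/exponential/factorial hierarchy. In the context of this paper the right disposition is to cite Theorem~\ref{gg}, as the author does, rather than attempt to reprove it.
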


This theorem is the culmination of many authors' work.  We direct the reader to \cite{BBW1} for the gap between cases 1 and 2 and within 2, to  \cite{BBW1, ajump} for the gap between cases 2 and 3, to \cite{BoTh2, ABBM} for the gap between 3 and 4, and to \cite{BoTh2} for the gaps within case 4.  Further, it was shown in \cite{BBW2} that there exist hereditary graph properties whose speeds oscillate between the lower and upper bound of case 3, therefore ruling out any more gaps in this range.  Thus Theorem \ref{gg} solves the problem of what are the possible speeds of hereditary graph properties.

On the other hand, there remain many open questions around generalizing Theorem \ref{gg}, even to the setting of $r$-uniform hypergraphs, when $r\geq 3$.  We focus on one such problem in this paper.  If $\calH$ is a hereditary property of $r$-uniform hypergraphs, then $|\calH_n|\leq 2^{n\choose r}$, and it was shown in  \cite{Alekseev1} and \cite{BoTh1} that either $|\calH_n|=2^{cn^r+o(n^r)}$ for some $c>0$, or $|\calH_n|\leq 2^{o(n^r)}$.  In other words, the fastest possible speed of a hereditary property of $r$-uniform hypergraphs is $2^{\Theta(n^r)}$, and there is a gap between the fastest and penultimate speeds.  However, it remained open whether this gap could be strengthened in analogy to the gap between cases 3 and 4 in Theorem \ref{gg}, as we summarize below in Question \ref{ab}.

\begin{question}\label{ab}
Suppose $r\geq 3$.  Is it true that for any hereditary property $\calH$ of $r$-uniform hypergraphs, either $|\calH_n|=2^{cn^r+o(n^r)}$ for some $c>0$, or there is $\epsilon>0$ such that for all large $n$, $|\calH_n|\leq 2^{n^{r-\epsilon}}$?
\end{question}

Given that model theoretic dividing lines are connected to infinitary counting problems, it is natural to ask whether they are also connected to finitary counting problems such as Question \ref{ab}.  The main results of this paper will establish such a connection, as well as answer Question \ref{ab} in the affirmative.  

Given a finite relational language $\calL$, a \emph{hereditary $\calL$-property} is a class of finite $\calL$-structures,  $\calH$, closed under isomorphism such that if $A$ is a model theoretic substructure of $B$ and $B\in \calH$, then $A\in \calH$.  The \emph{speed} of $\calH$ is the function $n \mapsto |\calH_n|$, where $\calH_n$ denotes the set of elements in $\calH$ with universe $[n]$.  The general problems we are interested in are the following.
\begin{itemize}
\item What are the jumps in speeds of hereditary $\calL$-properties?
\item  Can these jumps be characterized via model theoretic dividing lines?  
\end{itemize}
In this paper, we make progress on these problem by improving the known the gap between the penultimate and fastest possible speeds of a hereditary $\calL$-property, and by connecting this gap to the model theoretic dividing line of $\ell$-dependence. Specifically, we will characterize this gap in terms of a cousin of $\VC$-dimension, which we denote $\VC_{\ell}^*$-dimension.  We now state our main result, Theorem \ref{th2}.  We will then discuss how it improves known results and how it is connected to $\ell$-dependence.

\begin{theorem}\label{th2}
Suppose $\calL$ is a finite relational language of maximum arity $r\geq 1$, and $\calH$ is a hereditary $\calL$-property. Then either
\begin{enumerate}[(a)]
\item $VC^*_{r-1}(\calH)<\infty$ and there is an $\epsilon>0$ such that for sufficiently large $n$, $|\calH_n|\leq 2^{n^{r-\epsilon}}$, or
\item $VC^*_{r-1}(\calH)=\infty$, and there is a constant $C>0$ such that $|\calH_n|= 2^{Cn^{r}+o(n^r)}$.
\end{enumerate}
When $r=1$, the following stronger version of (a) holds: $VC^*_{0}(\calH)<\infty$ and there $K>0$ such that for sufficiently large $n$, $|\calH_n|\leq n^K$.
\end{theorem}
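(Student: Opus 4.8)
The plan is to separate the two halves of the dichotomy according to whether $\VC^*_{r-1}(\calH)$ is finite or infinite, and to prove each half via a counting argument relating the number of structures on $[n]$ to a suitable shatter function. First I would fix the key combinatorial object: for a hereditary $\calL$-property $\calH$, one should think of each $A\in\calH_n$ as a "colouring" of the $(\leq r)$-subsets (or $r$-tuples) of $[n]$ by the finitely many atomic $\calL$-types, and $\VC^*_{r-1}$-dimension should measure the largest "$(r-1)$-ary" configuration that such colourings can shatter. So the first step is to set up the precise definition of $\VC^*_\ell$-dimension in this context (presumably done earlier in the paper), and to record a Sauer–Shelah–type bound: if $\VC^*_{r-1}(\calH)=d<\infty$, then the number of distinct "traces" of structures in $\calH$ on any fixed set of $r$-tuples of $[n]$ indexed through an $(r-1)$-ary parameter is polynomially bounded in terms of $d$.

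The second step handles case (a): assuming $d:=\VC^*_{r-1}(\calH)<\infty$, I would bound $|\calH_n|$ by iterating the Sauer–Shelah bound. The idea is that specifying an element of $\calH_n$ amounts to specifying, for each of the $n$ elements (or each of the $\binom{n}{r-1}$ choices of the first $r-1$ coordinates), the "slice" function on the remaining coordinate, and bounded $\VC^*_{r-1}$-dimension forces each such family of slices to have only $n^{O(d)}$ possibilities; multiplying over the $\binom{n}{r-1}=\Theta(n^{r-1})$ choices gives $|\calH_n|\leq (n^{O(d)})^{O(n^{r-1})}=2^{O(n^{r-1}\log n)}\leq 2^{n^{r-\epsilon}}$ for suitable $\epsilon>0$ and large $n$. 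In the base case $r=1$ there are no free "first coordinates" at all, so the same argument collapses to $|\calH_n|\leq n^{O(d)}=n^K$, giving the stronger conclusion claimed at the end of the theorem. This step should be essentially a careful bookkeeping argument once the right Sauer–Shelah statement is in hand; the main technical care is to make sure the "slicing" respects the hereditary/isomorphism-closure structure so that the counts genuinely multiply.

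The third step, which I expect to be the main obstacle, handles case (b): assuming $\VC^*_{r-1}(\calH)=\infty$, one must produce a constant $C>0$ with $|\calH_n|=2^{Cn^r+o(n^r)}$. The lower bound direction is where the work lies: infinite $\VC^*_{r-1}$-dimension gives, for every $k$, some structure in $\calH$ shattering a $k$-sized $(r-1)$-ary configuration, and one must amplify this into a genuinely $2^{\Omega(n^r)}$-sized family of pairwise non-isomorphic structures on $[n]$. The natural route is a product/blow-up construction: take a large shattered configuration, blow up each coordinate class to size $\Theta(n)$, and use heredity to conclude all the "sub-colourings" obtained by independently choosing the atomic type on each of the $\Theta(n^r)$ blown-up $r$-tuples lie in $\calH$; this yields $\geq 2^{\Omega(n^r)}$ structures, and after dividing by $n!$ for labellings one still has $2^{\Omega(n^r)}$ non-isomorphic ones, hence $|\calH_n|\geq 2^{cn^r}$ for some $c>0$. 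Combined with the trivial upper bound $|\calH_n|\leq 2^{O(n^r)}$, the existence of the limiting constant $C$ then follows from a standard subadditivity (Fekete-type) argument applied to $\log_2|\calH_n|/n^r$, exactly as in the hypergraph container/removal results of \cite{Alekseev1, BoTh1}. The delicate points will be (i) extracting from "infinite $\VC^*_{r-1}$-dimension" a shattered configuration whose combinatorial shape is uniform enough to blow up — in particular handling the interaction between the $(r-1)$-ary index and lower-arity relations in $\calL$ — and (ii) verifying that heredity really does give closure of $\calH$ under the independent per-tuple choices, which may require first passing to an auxiliary structure (e.g. identifying a definable or type-definable "random-like" piece) before the blow-up. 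Once (b) is established, combining it with (a) and noting the two hypotheses are exhaustive and mutually exclusive completes the proof.
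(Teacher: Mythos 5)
Your step for case (b) is sound in outline and close to the paper's approach: the paper lower-bounds $|\calH_n|$ via the already-proved Theorem \ref{th5} applied with $\ell=r$, and then invokes the existence of the entropy constant from Theorem \ref{enumeration} (proved in \cite{CTHParxiv}) rather than redoing a Fekete-type argument, but the ideas match.

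The genuine gap is in your case (a). The ``Sauer--Shelah-type bound'' you posit --- that $\VC^*_{r-1}(\calH)=d<\infty$ forces the family of slices through any fixed $(r-1)$-ary parameter to have only $n^{O(d)}$ members, hence $|\calH_n|\leq (n^{O(d)})^{\Theta(n^{r-1})}=2^{O(n^{r-1}\log n)}$ --- is false. Already for $r=2$, let $\calH$ be the hereditary property of graphs with no (not necessarily induced) $K_{s,s}$ subgraph for a large $s$. By K\H{o}v\'ari--S\'os--Tur\'an, $|\calH_n|\leq 2^{O(n^{2-1/s}\log n)}=2^{o(n^2)}$, so the contrapositive of Theorem \ref{th5} (with $\ell=2$) gives $\VC^*_1(\calH)<\infty$; yet taking all subgraphs of a dense $K_{s,s}$-free graph shows $|\calH_n|\geq 2^{\Omega(n^{2-2/(s+1)})}$, which exceeds $2^{O(n\log n)}$ once $s$ is large. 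The structural reason your bound fails is that for $\VC_\ell$-dimension with $\ell\geq 2$ the shatter function is not polynomially bounded; the sharp general statement (Chernikov--Palacin--Takeuchi, Theorem \ref{CPT'} in the paper) is $\pi_\ell(\calF,m)\leq C2^{m^{\ell-\epsilon}}$, still exponential, only in a slightly smaller power of $m$. The paper's proof of (a) accordingly does something different: it shows, by a type-counting argument that pulls the first coordinate out of a hypothetically shattered $(r,r)$-box and reaches a contradiction with $\VC^*_{r-1}(\calH)\leq d$, that $\VC_r(\calF_\varphi(n))\leq d$ for each $\varphi(\xbar;\ybar)\in rel(\calL)$ with $|\xbar|=1$; it then applies Theorem \ref{CPT'} with $\ell=r$ and $m=n$ (the box $[n]^r$) to obtain $|\calF_\varphi(n)|\leq C2^{n^{r-\epsilon}}$, and multiplies over the finitely many such $\varphi$. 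Without the CPT bound and without the lemma tying $\VC^*_{r-1}$ of $\calH$ to $\VC_r$ of the families $\calF_\varphi(n)$, your slicing plan cannot reach $2^{n^{r-\epsilon}}$.
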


Theorem \ref{th2} strengthens what was previously shown in \cite{CTHParxiv}, that for any hereditary $\calL$-property $\calH$, either $|\calH_n|=2^{Cn^r+o(n^r)}$ for some $C>0$, or $|\calH_n|\leq 2^{o(n^r)}$, where $r$ is the maximum arity of the relations in $\calL$.  This result generalizes the gap between cases 3 and 4 in Theorem \ref{gg}, and is new in all cases where $r\geq 3$.  Theorem \ref{th2} answers Question \ref{ab} in the affirmative.

Theorem \ref{th2} also shows the gap between the penultimate and fastest possible speeds of a hereditary $\calL$-property  is characterized by a model theoretic dividing line.  The dimension appearing in Theorem \ref{th2}, $\VC_{\ell}^*$-dimension, is a dual version of the existing model theoretic notion of $\VC_{\ell}$-dimension (see Section \ref{bgd} for precise definitions).  $\VC_{\ell}$-dimension is a direct generalization of $\VC$-dimension defined in terms of shattering ``$\ell$-dimensional boxes."  This dimension was first introduced  in \cite{Shelahstronglydep}, where it is used to define the dividing line called $\ell$-dependence.  $\VC_{\ell}$-dimension and $\ell$-dependence have since been studied from the model theoretic point of view in \cite{hempel, ozlem, CPT, 2depgps, depdreams}.  We will show that the condition $\VC_{\ell}^*(\calH)<\infty$ is a natural analogue of $\ell$-dependence for a hereditary $\calL$-property $\calH$. Thus Theorem \ref{th2} can be seen as characterizing a gap in possible speeds of hereditary $\calL$-properties using a version of the model theoretic dividing line of $\ell$-dependence.

Our next result shows that the gap between polynomial and exponential growth is always characterized by $\VC_0^*$-dimension, regardless of the arity of the language.
\begin{theorem}\label{zerodim}
Suppose $\calL$ is a finite relational language, and $\calH$ is a hereditary $\calL$-property. Then either
\begin{enumerate}[(a)]
\item $\VC^*_{0}(\calH)<\infty$ and there $K>0$ such that for sufficiently large $n$, $|\calH_n|\leq n^K$, or 
\item $\VC^*_{0}(\calH)=\infty$, and there is a constant $C>0$ such that for sufficiently large $n$, $|\calH_n|\geq 2^{Cn}$.
\end{enumerate}
\end{theorem}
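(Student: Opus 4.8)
The plan is to establish the two implications of the dichotomy separately, following the overall strategy used for Theorem \ref{th2} but exploiting that $\VC^*_0(\calH)$ is a much coarser invariant than $\VC^*_{r-1}(\calH)$: its finiteness should force genuine near-triviality of $\calH$, not merely the weaker ``subpolynomial exponent'' conclusion of Theorem \ref{th2}(a). Since $\VC^*_0(\calH)$ is either finite or $\infty$, proving each of (a) and (b) yields the full statement; the $r=1$ instance then recovers the last sentence of Theorem \ref{th2}.

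For part (b), suppose $\VC^*_0(\calH)=\infty$. Unwinding the definition, this produces for every $m$ a structure $A_m\in\calH$, a quantifier-free $\calL$-formula, and a configuration inside $A_m$ witnessing $\VC^*_0(\calH)\geq m$ — concretely, $m$ quantifier-free definable subsets of $A_m$ (over parameters from $A_m$) whose $2^m$ Boolean combinations are all nonempty, so that within $A_m$ one can read off and set $m$ ``bits'' independently. The first step is to convert this into exponentially many members of $\calH_n$. Taking $m=\Theta(n)$ and using that $\calH$ is closed under substructures, one passes to the induced structure on an $n$-element subset of $A_m$ containing the configuration, inside which $\Omega(n)$ mutually independent choices are simultaneously available; the one delicate point is that $\calH$ need not be closed under disjoint unions, so the packing must be carried out inside a single shattering witness rather than by juxtaposing small pieces. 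Because distinct choices are already separated by quantifier-free types (which is exactly how the realized structures fail to be isomorphic, or at least forces distinct labeled structures on $[n]$ when isomorphism types collapse, as happens e.g.\ for linear orders), this gives $2^{\Omega(n)}$ distinct elements of $\calH_n$, so $|\calH_n|\geq 2^{Cn}$ for a suitable $C>0$ and all large $n$.

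For part (a), suppose $\VC^*_0(\calH)=d<\infty$; this is where the real work lies. The target is a rigidity statement: there is $k=k(d,\calL)$ such that every $A\in\calH_n$ is determined by a choice of at most $k$ of its elements together with a bounded amount of quantifier-free data about them — equivalently, all but at most $k$ elements of $A$ realize a single quantifier-free $1$-type over a fixed $k$-element subset and interact with that subset, and with each other, through one of finitely many bounded patterns. Proving this is the $\ell=0$ analogue of the structure theorems underpinning Theorem \ref{th2}(a): one combines a Sauer--Shelah-type bound on the number of quantifier-free types realized over a set with a Ramsey/compactness argument showing that $\VC^*_0(\calH)=d$ rules out the kind of unbounded ``array'' growth needed to defeat rigidity. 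Given the rigidity statement, a direct count bounds $|\calH_n|$ by $\binom{n}{k}$ times a constant, hence by $n^K$ with $K$ depending only on $k$ and $\calL$. I expect this rigidity step to be the main obstacle — in particular, showing that bounded $\VC^*_0$-dimension of the full (possibly $r$-ary) language already forces this strong structure without routing through the weaker conclusion of Theorem \ref{th2}(a), and controlling how the bounded ``exceptional'' part, including the relations among the non-exceptional elements, can be located in only polynomially many ways.
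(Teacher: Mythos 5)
Your part (b) essentially re-derives what the paper packages as Theorem \ref{th5}: pass to an $n$-element induced substructure of a single shattering witness so that $2^{\Omega(n)}$ realized type-sequences survive inside it, then relabel onto a fixed universe using closure under isomorphism. That is the right idea, and the paper simply cites Theorem \ref{th5} at this point.

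For part (a) there is a genuine gap, and you flag it yourself: your plan hinges on a rigidity/structure theorem (``all but $k$ elements realize one $1$-type over a fixed $k$-set, with bounded interaction patterns'') which you do not prove, and whose derivability from $\VC_0^*(\calH)<\infty$ alone is far from clear. The paper's route avoids any such global structure theorem. Fix a trivially partitioned $\varphi(\xbar)\in rel(\calL)$ and consider the family $\calF_\varphi(n)\subseteq \calP\bigl([n]^{\underline{|\xbar|}}\bigr)$ of all sets $\varphi(\calM)$ for $\calM\in\calH_n$. One shows $\VC(\calF_\varphi(n))$ is bounded: if a set $U$ of $k:=(d+1)^r2^{r\choose 2}$ tuples were shattered, Lemma \ref{indlem}(b) produces $V\subseteq U$ of size $d+1$ that is indiscernible with respect to equality, hence carries a single pairwise equality type $\rho$; the shattering then yields $|S_{\varphi,d+1}(\emptyset,\rho)|=2^{d+1}$, contradicting $\VC_0^*(\calH)=d$. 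Sauer--Shelah then bounds each $|\calF_\varphi(n)|$ polynomially in $n$, and multiplying over the finitely many trivially partitioned $\varphi\in rel(\calL)$ gives $|\calH_n|\leq n^K$. The two moves missing from your sketch are: (i) reduce to a $\VC$-dimension bound for the set system $\calF_\varphi(n)$ of relation-sets rather than attempt a structure theorem about the members of $\calH$; and (ii) use equality-indiscernibility to manufacture the uniform $\rho$ demanded by the definition of $\VC_0^*$ --- without this step, a shattered set of arbitrary tuples does not by itself contradict $\VC_0^*(\calH)<\infty$, since $S_{\varphi,m}(\mathbb{A},\rho)$ insists on a fixed equality pattern among the realizing tuples.
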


Theorem \ref{zerodim} is new at this level of generality, in the labeled setting.  There exist general results on the polynomial/exponential counting dichotomy in the unlabelled setting (see for instance \cite{pouzet1, oudrar}), and it is possible the machinery developed in that line of work could be used to obtain the dichotomy of Theorem \ref{zerodim}.  The connection this paper makes between this problem and $\VC_{\ell}$-dimension is new.  Thus, while the existence of the dichotomy described by Theorem \ref{zerodim} is not surprising given past results, Theorem \ref{zerodim} draws a connection to $\VC_{\ell}$-dimension which we think is important for understanding the larger pattern at work.

The dichotomies in Theorems \ref{th2} and \ref{zerodim} depend on whether $\VC_{\ell}$-dimension is finite or infinite, for certain values of $\ell$.  Both results use the following theorem, which shows that infinite $\VC^*_{\ell}$-dimension always implies a lower bound on the speed.

\begin{theorem}\label{th5}
Suppose $\calL$ is a finite relational language of maximum arity $r$, and $\calH$ is a hereditary $\calL$-property.  If $1\leq \ell \leq r$ and $VC^*_{\ell-1}(\calH)=\infty$, then there is $C>0$ such that for large $n$, $|\calH_n|\geq 2^{Cn^{\ell}}$.
\end{theorem}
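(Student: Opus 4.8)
The plan is to convert infinite $VC^*_{\ell-1}$-dimension into a single member of $\calH$ carrying a large shattering configuration, and then to harvest $2^{\Omega(n^{\ell})}$ pairwise distinct elements of $\calH_n$ out of it, using heredity to keep all of them inside $\calH$.

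First I would unpack the hypothesis. Since $\calL$ is finite there are only finitely many atomic formulas and finitely many ways to split the variables of one of them into $\ell$ blocks, and because every relation symbol of $\calL$ has arity at most $r$ and we are in the range $\ell\le r$, at least one such splitting with all $\ell$ blocks nonempty exists. The statement $VC^*_{\ell-1}(\calH)=\infty$ then furnishes, for each $m$, a single atomic formula $\phi(\bar z;\bar y_1,\dots,\bar y_{\ell-1})$ (together with a fixed finite parameter tuple, suppressed from the notation) and a finite structure $A_m\in\calH$ carrying a \emph{standard shattering system of size $m$}: parameter tuples $\bar b^{1}_1,\dots,\bar b^{1}_m,\ \dots,\ \bar b^{\ell-1}_1,\dots,\bar b^{\ell-1}_m$, occupying $O(m)$ elements of $A_m$ in total, together with, for every $S\subseteq[m]^{\ell-1}$, an element $c_S\in A_m$, such that
\[
A_m\models\phi\bigl(c_S; \bar b^{1}_{i_1},\dots,\bar b^{\ell-1}_{i_{\ell-1}}\bigr)
\iff (i_1,\dots,i_{\ell-1})\in S .
\]
In particular the $c_S$ are pairwise distinct, and $\phi$ \emph{honestly} separates them: if $S\neq S'$ then $c_S$ and $c_{S'}$ have distinct $\phi$-patterns over the box $\{\bar b^{j}_i\}$. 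If the bare definition of $VC^*_{\ell-1}$ does not already supply a system in this normal form, one passes to a sufficiently large finite witness and applies Ramsey's theorem to homogenize it; the case $\ell=1$ is degenerate, the ``box'' disappearing, and there $VC^*_0(\calH)=\infty$ instead provides structures of $\calH$ in which at least two distinct quantifier-free $1$-types over a fixed parameter tuple are each realized arbitrarily many times, which serves the same purpose below.

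Next I would build the structures on $[n]$. Let $c_0$ denote the number of elements of $A_m$ used by the box per index $i$, a constant depending only on $\calL$ and $\phi$; fix $n$ large, set $m=\lfloor n/(2c_0)\rfloor$ so that the box occupies at most $n/2$ labels, and put $t=n-c_0m=\Theta(n)$, noting $t\le 2^{m^{\ell-1}}$ once $n$ is large when $\ell\ge 2$. For an ordered $t$-tuple $(S_1,\dots,S_t)$ of \emph{distinct} subsets of $[m]^{\ell-1}$, restrict $A_m$ to the box together with $c_{S_1},\dots,c_{S_t}$ and relabel this substructure so that its universe is exactly $[n]$, the box elements receiving the labels $1,\dots,c_0m$ in a fixed manner and $c_{S_k}$ receiving label $c_0m+k$. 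By heredity and closure under isomorphism the result lies in $\calH_n$. Since the atomic $\phi$-relations between label $c_0m+k$ and the labels of the box form part of the labeled structure and $\phi$ honestly separates the $c_S$, the labeled structure recovers each $S_k$, so distinct ordered tuples yield distinct members of $\calH_n$; hence
\[
|\calH_n|\ \ge\ 2^{m^{\ell-1}}\bigl(2^{m^{\ell-1}}-1\bigr)\cdots\bigl(2^{m^{\ell-1}}-t+1\bigr)
\ \ge\ \bigl(2^{m^{\ell-1}-1}\bigr)^{t}
\ =\ 2^{(m^{\ell-1}-1)t}
\ \ge\ 2^{Cn^{\ell}}
\]
for a suitable constant $C=C(\ell,c_0)>0$ and all large $n$. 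When $\ell=1$ the same bookkeeping applied to the two-repeatable-types configuration (keep the fixed parameter tuple, and let each of the remaining $n-O(1)$ labels independently carry one of the two types) gives $|\calH_n|\ge 2^{\Omega(n)}$.

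I expect the crux to be the first step: extracting the normal-form shattering system from the raw hypothesis and, above all, certifying that the witnessing $\phi$ honestly separates the coding elements $c_S$, since it is precisely this honesty that makes the relabeling map injective in the counting step. If $VC^*_{\ell-1}$ is not already phrased as such a shattering condition, this should go through a Ramsey-type homogenization of the configuration it produces --- making the box coordinates and the coding elements behave uniformly and be mutually distinguishable --- with separate attention to the boundary case $\ell=1$. The remaining ingredients, namely the choice of $m$ and $t$, the appeal to heredity, and the falling-factorial estimate, are routine.
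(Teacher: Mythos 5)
Your overall plan --- convert the shattering hypothesis into a single witnessing structure $A_m\in\calH$, pick out many shattering elements, take substructures, relabel to $[n]$, and count by heredity --- is a legitimate route, and your bookkeeping at the end (the choice of $m$ and $t$, and the falling-factorial estimate $\prod_{j<t}(2^{m^{\ell-1}}-j)\geq 2^{(m^{\ell-1}-1)t}$) is fine once the ``normal form'' is granted. But the normal form is precisely where the gap lies, and it is not a detail that Ramsey can repair.

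Concretely, $\VC^*_{\ell-1}(\varphi,\calH)\geq m$ asserts that there exist an $(\ell-1,|\ybar|)$-box $\mathbb{A}$ of height $m$ and an equality type $\rho$ such that \emph{every} length-$m$ sequence $(p_1,\dots,p_m)$ of $\varphi$-types over $\mathbb{A}$ is simultaneously realized in \emph{some} $\calM\in\calH$. This is a statement about $m$-tuples of types, realized in possibly different structures for different choices of sequence; it does not say that any single $\calM\in\calH$ realizes all $2^{m^{\ell-1}}$ types over $\mathbb{A}$. Your normal form --- one $A_m\in\calH$ carrying $c_S$ for every $S\subseteq[m]^{\ell-1}$ --- is exactly the assertion $\VC_{\ell-1}(\calH)\geq m$, which is strictly stronger on its face. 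Ramsey's theorem can homogenize the equality behavior of the realizing tuples (this is what the fixed $\rho$ already does in the definition), but it cannot force exponentially many inequivalent types to appear inside a single finite member of $\calH$. Nor can compactness or amalgamation be invoked, since $\calH$ is just a class of finite structures without any joint-embedding assumption. The implication $\VC^*_{\ell-1}(\calH)=\infty\Rightarrow\VC_{\ell-1}(\calH)=\infty$ (for $\ell-1\geq 1$) is true and is proved in the paper as Theorem \ref{eqth}, but its proof runs through the key counting lemma (Lemma \ref{shlem}, especially part (d)): start from a box of height $K\gg m$, pass to a sub-box of height $m$, observe that the $2^{m^{\ell-1}}$ types over the sub-box each have at least $K^{\ell-1}$-ish extensions to the big box, and then choose a single length-$K$ sequence that hits each sub-box type at least $m$ times; simultaneous realizability of \emph{that} sequence gives one $\calM$ shattering the sub-box. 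This is a pigeonhole/sub-box argument, not homogenization, and your proposal does not contain it.

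The paper's own proof of Theorem \ref{th5} sidesteps the reduction to $\VC_{\ell-1}$ entirely: it passes to a sub-box $\mathbb{A}'$ of height $m=\lfloor n/3t\rfloor$, applies Lemma \ref{shlem}(d) directly to obtain a single $\calM\in\calH$ containing a realization of \emph{every} length-$m$ sequence of types over $\mathbb{A}'$ together with $n$ spare padding elements, and then harvests one structure on $n$ vertices per sequence. The role of $\rho$ there is to make all the realizing tuples $\Cbar$ have the same equality type over $\mathbb{A}'$, so that $|C\cup A'|$ is constant and the padding by $E'$ and the relabeling $f_{\Cbar}$ work uniformly --- a point your sketch glosses over, and one that does require care when $|\xbar|>1$ (your ``$c_S\in A_m$'' tacitly assumes $|\xbar|=1$, which is not forced by the hypothesis; the paper's proof handles general $\xbar$). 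So: your route would work if you replaced the appeal to Ramsey with the sub-box extraction of Lemma \ref{shlem} (or an explicit appeal to a strengthened Theorem \ref{eqth}), and tracked underlying-set sizes for multi-variable $\xbar$; as written, the normal-form step is a genuine gap.
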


Somewhat surprisingly, the converse of Theorem \ref{th5} fails.  In particular, we will give an example of a hereditary property of $3$-uniform hypergraphs with $\VC_1(\calH)<\infty$ but with $|\calH_n|\geq 2^{Cn^2}$ for some $C>0$ (see Example \ref{ex1}). We would like to thank D. Mubayi for bringing said example to our attention.  These observations suggest the following interesting open problem.

\begin{problem}
Suppose $\calL$ is a finite relational language of maximum arity $r\geq 3$ and $\ell$ is an integer satisfying $2\leq \ell <r$.  Say a hereditary $\calL$-property $\calH$ has \emph{fast $\ell$-dimensional growth} if $|\calH_n|\geq 2^{\Omega(n^{\ell})}$.  Characterize the hereditary $\calL$-properties with fast $\ell$-dimensional growth.
\end{problem}

We end this introduction with a brief outline of the paper.  In Section \ref{bgd} we give background on $\VC_{\ell}$-dimension and $\VC_{\ell}^*$-dimension. In Section \ref{tech} we present technical lemmas needed for the proofs of our main results. In Section \ref{secth5} we prove Theorems \ref{th2} and \ref{zerodim} and present Example \ref{ex1}.  In Section \ref{eqsection}, we prove that when $\ell>0$, $\VC_{\ell}^*(\calH)=\infty$ if and only if $\VC_{\ell}(\calH)=\infty$.  

\section{Preliminaries}\label{bgd}

In this section, we introduce $\VC_{\ell}$-dimension for $\ell \geq 1$ and $\VC_{\ell}^*$-dimension for $\ell\geq 0$.  For this section, $\calL$ is some fixed language.  We will denote $\calL$-structures with script letters, e.g. $\calM$, and their universes with the corresponding non-script letters, e.g. $M$.  Given an integer $n$, $[n]:=\{1,\ldots, n\}$.  If $X$ is a set, ${X\choose n}=\{Y\subseteq X: |Y|=n\}$, and if $\xbar=(x_1,\ldots, x_s)$ is a tuple, then $|\xbar|=s$.  

\subsection{$\VC$-dimension and $\VC_{\ell}$-dimension}

In this subsection we define $\VC$-dimension and $\VC_{\ell}$-dimension.  We begin by introducing $\VC$-dimension.  Given sets $A\subseteq X$, $\calP(X)$ denotes the power set of $X$.  If $\calF\subseteq \calP(X)$, then $\calF \cap A$ denotes the set $\{F\cap A: F\in \calF\}$.  We say $A$ is \emph{shattered} by $\calF$ if $\calF \cap A=\calP(A)$.  The \emph{$\VC$-dimension} of $\calF$ is $\VC(\calF)=\sup \{|A|: A\subseteq X \text{ is shattered by }\calF\}$, and the \emph{shatter function} of $\calF$ is defined by $\pi(\calF,m)=\max \{|\calF\cap A|: A\in {X\choose m}\}$.  Observe that $\VC(\calF)\geq m$ if and only if $\pi(\calF,m)=2^m$. One of the most important facts about $\VC$-dimension is the Sauer-Shelah Lemma.

\begin{theorem}[\bf{Sauer-Shelah Lemma}]\label{ssl}
Suppose $X$ is a set and $\calF\subseteq \calP(X)$.  If $\VC(\calF)=d$, then there is a constant $C=C(d)$ such that for all $m$, $\pi(\calF,m)\leq C m^d$.
\end{theorem}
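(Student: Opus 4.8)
The final statement to prove is the Sauer--Shelah Lemma: if $\VC(\calF)=d$ then $\pi(\calF,m)\le Cm^d$ for a constant $C=C(d)$. The plan is to prove the standard sharper bound $\pi(\calF,m)\le\sum_{i=0}^{d}\binom{m}{i}$, from which the stated form follows at once: for $m\ge 1$ every summand is at most $m^d$, so $C=d+1$ works (the degenerate small-$m$ cases being harmless). Since $\pi(\calF,m)$ depends only on the traces $\calF\cap A$ with $|A|=m$, it suffices to fix a finite $A$ with $|A|=m$ and bound $|\calG|$ for the trace family $\calG:=\calF\cap A\subseteq\calP(A)$, using that $\calG$ shatters no subset of $A$ of size exceeding $d$.

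The heart of the argument is a Pajor-type inequality: \emph{every finite family $\calG\subseteq\calP(A)$ shatters at least $|\calG|$ distinct subsets of $A$}. I would prove this by induction on $|A|$, the case $A=\emptyset$ being immediate. For the inductive step, fix $a\in A$, set $A'=A\setminus\{a\}$, and define
\[
\calG^{-}=\{G\setminus\{a\}: G\in\calG\}\subseteq\calP(A'),\qquad \calG^{*}=\{G\subseteq A': G\in\calG \text{ and } G\cup\{a\}\in\calG\}.
\]
Counting, for each $S\subseteq A'$, how many of $S$ and $S\cup\{a\}$ lie in $\calG$ yields the identity $|\calG|=|\calG^{-}|+|\calG^{*}|$. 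By the inductive hypothesis $\calG^{-}$ shatters at least $|\calG^{-}|$ subsets of $A'$, each of which is also shattered by $\calG$; and $\calG^{*}$ shatters at least $|\calG^{*}|$ subsets $S\subseteq A'$, and for each such $S$ the set $S\cup\{a\}$ is shattered by $\calG$ (given $T\subseteq S$, choose $G\in\calG^{*}$ with $G\cap S=T$; then $G$ and $G\cup\{a\}$ realize $T$ and $T\cup\{a\}$ on $S\cup\{a\}$). These two collections of shattered sets are disjoint, since the first consists of subsets of $A'$ and the second of sets containing $a$, so $\calG$ shatters at least $|\calG^{-}|+|\calG^{*}|=|\calG|$ subsets of $A$, completing the induction.

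To finish, apply the inequality to $\calG=\calF\cap A$: as $\VC(\calF)=d$, every subset of $A$ shattered by $\calG$ has size at most $d$, so $\calG$ shatters at most $\sum_{i=0}^{d}\binom{m}{i}$ subsets of $A$, whence $|\calF\cap A|\le\sum_{i=0}^{d}\binom{m}{i}$; maximizing over $A$ gives $\pi(\calF,m)\le\sum_{i=0}^{d}\binom{m}{i}\le (d+1)m^d$ for $m\ge 1$. I do not anticipate a serious obstacle here: the only delicate points are the bookkeeping behind $|\calG|=|\calG^{-}|+|\calG^{*}|$ and the purely arithmetic passage from the binomial sum to the form $Cm^d$. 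An alternative to the induction on $|A|$ is the down-shift (compression) argument, which replaces $\calF$ by a downward-closed family of the same cardinality without increasing the traces on any set; but the Pajor induction above is the most self-contained route.
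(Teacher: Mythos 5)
Your proof is correct. The paper simply states the Sauer--Shelah Lemma as a well-known background fact (Theorem \ref{ssl}) and does not prove it, so there is no ``paper's proof'' to compare against. What you give is the standard Pajor-type argument: fix a set $A$ of size $m$, pass to the trace family $\calG=\calF\cap A$, prove by induction on $|A|$ (via the decomposition $|\calG|=|\calG^-|+|\calG^*|$) that $\calG$ shatters at least $|\calG|$ subsets of $A$, and conclude $|\calG|\le\sum_{i=0}^d\binom{m}{i}\le (d+1)m^d$. All the steps check out: the identity $|\calG|=|\calG^-|+|\calG^*|$ follows by examining, for each $S\subseteq A'$, whether $S$, $S\cup\{a\}$, both, or neither lie in $\calG$; the shattered-set counts for $\calG^-$ and $\calG^*$ transfer to $\calG$ exactly as you describe; the two collections of shattered sets are disjoint because one consists of subsets of $A'$ and the other of sets containing $a$; and any set shattered by the trace $\calG$ is shattered by $\calF$ itself, so all shattered subsets of $A$ have size at most $d$. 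The only slight blemish is the passing remark about ``degenerate small-$m$ cases'': for $m=0$ and $d\ge 1$ the bound $Cm^d$ is zero while $\pi(\calF,0)=1$, so strictly one should either read the theorem for $m\ge 1$ or absorb the $m=0$ case into the constant; this is cosmetic and does not affect the argument.
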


$\VC$-dimension is important in various fields, including combinatorics, computer science, and model theory.  We direct the reader to \cite{simon2015guide} for more details.  Given $\ell \geq 1$, $\VC_{\ell}$-dimension is a generalization of $\VC$-dimension which focuses on the shattering sets of a special form.  If $X_1,\ldots, X_{\ell}$ are sets, then $\prod_{i=1}^{\ell}X_i$ is an \emph{$\ell$-box}.  If $|X_1|=\ldots=|X_{\ell}|=m$, then we say $\prod_{i=1}^{\ell}X_i$ is an $\ell$-box of \emph{height $m$}.  If $X_1'\subseteq X_1, \ldots, X_{\ell}'\subseteq X_{\ell}$, then $\prod_{i=1}^{\ell}X'_i$ is a \emph{sub-box} of $\prod_{i=1}^{\ell}X_i$.

\begin{definition}\label{vcdef1}
Suppose $\ell\geq 1$, $\prod_{i=1}^{\ell}X_i$ is an $\ell$-box, and $\calF\subseteq \calP(\prod_{i=1}^{\ell}X_i)$. The \emph{$\VC_{\ell}$-dimension of $\calF$} is 
$$
\VC_{\ell}(\calF)=\sup \{ m\in \mathbb{N}: \text{ $\calF$ shatters a sub-box of $\prod_{i=1}^{\ell}X_i$ of height $m$}\}.
$$
 The \emph{$\ell$-dimensional shatter function} is $\pi_{\ell}(\calF, m)=\{|\calF \cap A|: A \text{ is a sub-box of $\prod_{i=1}^{\ell}X_i$ of height $m$}\}$.
\end{definition}

$\VC_{\ell}$-dimension was introduced in the model theoretic context in \cite{Shelahstronglydep}, where it was used to define the notion of an $\ell$-dependent theory.  It has since been studied as a diving line in \cite{hempel, ozlem, CPT, 2depgps, depdreams}.  Theorem \ref{CPT}, below, is an analogue of the Sauer-Shelah Lemma for $\VC_{\ell}$-dimension, which was proved in \cite{CPT}.

\begin{theorem}[{\bf Chernikov-Palacin-Takeuchi \cite{CPT}}]\label{CPT}
Suppose $\ell \geq 1$, $Y$ is an $\ell$-box, and $\calF\subseteq \calP(Y)$. If $\VC_{\ell}(\calF)=d<\infty$, then there are constants $C=C(d)$ and $\epsilon=\epsilon(d)>0$ such that for all $m\in \mathbb{N}$, $\pi_{\ell}(\calF,m)\leq C 2^{m^{\ell-\epsilon}}$.
\end{theorem}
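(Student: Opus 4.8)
My plan is to derive Theorem~\ref{CPT} from two classical facts, with no induction on $\ell$. First a reduction: since $\pi_{\ell}(\calF,m)=|\calF\cap A|$ for some sub-box $A$ of the ambient box of height $m$ (we may assume such a sub-box exists, else there is nothing to prove), and since $\VC_{\ell}(\calF\cap A)\le \VC_{\ell}(\calF)=d$ (a box shattered by $\calF\cap A$ is already shattered by $\calF$, and it is a sub-box of the ambient box), it suffices to prove the following: if $\calG\subseteq\calP([m]^{\ell})$ has $\VC_{\ell}(\calG)\le d$, then $|\calG|\le C\,2^{m^{\ell-\epsilon}}$ for suitable $C=C(d,\ell)$ and $\epsilon=\epsilon(d,\ell)>0$. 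The two ingredients are: (i) the standard refinement of the Sauer--Shelah Lemma (due to Pajor), $|\calG|\le|\mathrm{Sh}(\calG)|$, where $\mathrm{Sh}(\calG)=\{A\subseteq [m]^{\ell}:\calG\cap A=\calP(A)\}$ is the set of subsets shattered by $\calG$; and (ii) the multidimensional K\H{o}v\'ari--S\'os--Tur\'an (Zarankiewicz) bound of Erd\H{o}s: a subset of $[m]^{\ell}$ containing no combinatorial box $B_1\times\cdots\times B_{\ell}$ with $|B_1|=\cdots=|B_{\ell}|=t$ has size at most $K\,m^{\ell-1/t^{\ell-1}}$ for a constant $K=K(t,\ell)$.

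The observation that links these is that a set shattered by $\calG$ must be ``box-free'': if $\calG$ shatters $A$ and $A$ contains a sub-box $B$ of height $d+1$, then $\calG$ shatters $B$ as well (shattering is monotone under inclusion), contradicting $\VC_{\ell}(\calG)\le d$. Hence every $A\in\mathrm{Sh}(\calG)$ contains no sub-box of height $d+1$, so (ii) with $t=d+1$ gives $|A|\le s:=K\,m^{\ell-1/(d+1)^{\ell-1}}$. Counting such sets, $|\calG|\le|\mathrm{Sh}(\calG)|\le\sum_{k=0}^{s}\binom{m^{\ell}}{k}\le 2^{O(s\log m)}$, and since $s\log m=o(m^{\ell-\epsilon})$ for any fixed $\epsilon<1/(d+1)^{\ell-1}$, this is at most $C\,2^{m^{\ell-\epsilon}}$ once $C$ absorbs the finitely many small values of $m$ (for $m\le d$ the bound is already a constant). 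For $\ell=1$ the identical computation yields the sharper polynomial bound $Cm^{d}$, recovering Sauer--Shelah itself.

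The substantive input here is (ii); everything else is bookkeeping, so I do not expect a real obstacle, only quantifier care (notably height $d$ versus $d+1$ in the box-freeness step). If one wants the argument self-contained rather than quoting Erd\H{o}s, (ii) follows by induction on $\ell$ from the planar K\H{o}v\'ari--S\'os--Tur\'an theorem via a double count over the $(\ell-1)$-dimensional slices $\{(x_1,\dots,x_{\ell-1}):(x_1,\dots,x_{\ell-1},y)\in A\}$ — that induction is the one genuinely nontrivial piece. As an alternative to Pajor's inequality one can instead use the down-compression (shifting) argument from one of the standard proofs of Sauer--Shelah: iterating single-point down-shifts over all points of $[m]^{\ell}$ produces a downward-closed family $\calG^{*}$ with $|\calG^{*}|=|\calG|$ whose members are precisely the sets it shatters, and since shifting never creates a newly shattered set, every member of $\calG^{*}$ is box-free, after which one counts exactly as above; the only care there is in checking that the classical shifting lemmas go through verbatim with $[m]^{\ell}$ in place of $[n]$ as the ground set.
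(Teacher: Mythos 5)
The paper does not prove Theorem~\ref{CPT}: it imports it verbatim from \cite{CPT} (Chernikov--Palacin--Takeuchi), so there is no in-paper proof to compare your argument against. What you have written is, as far as I can tell, a correct and essentially self-contained derivation, and it is in fact the same route that \cite{CPT} takes. Your reduction to $\calG\subseteq\calP([m]^{\ell})$ with $\VC_{\ell}(\calG)\le d$ is sound, since any $(\ell,\cdot)$-box of height $m$ is a copy of $[m]^{\ell}$ and restriction can only decrease $\VC_{\ell}$-dimension. The two substantive ingredients are exactly right: Pajor's refinement of Sauer--Shelah gives $|\calG|\le|\mathrm{Sh}(\calG)|$, and the hereditary ``box-freeness'' of shattered sets together with the multidimensional Zarankiewicz bound of Erd\H{o}s (a set in $[m]^{\ell}$ with no combinatorial box of height $d+1$ has size $O_{d,\ell}(m^{\ell-1/(d+1)^{\ell-1}})$) caps the size of every shattered set, after which the binomial sum gives $2^{O(s\log m)}\le C\,2^{m^{\ell-\epsilon}}$ for any $\epsilon<1/(d+1)^{\ell-1}$, absorbing small $m$ into $C$. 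The box-freeness step is correct as written: if $\calG$ shatters $A$ and $B\subseteq A$ is a sub-box of height $d+1$, then $\calG$ shatters $B$ (shattering is downward hereditary), contradicting $\VC_{\ell}(\calG)\le d$. One small caveat worth flagging: your constants $C,\epsilon$ naturally depend on $\ell$ as well as $d$; the paper writes $C=C(d)$, $\epsilon=\epsilon(d)$ with $\ell$ left implicit in the ambient hypothesis, which is harmless for its applications (where $\ell$ is always fixed), but you should state the dependence honestly as $C(d,\ell)$, $\epsilon(d,\ell)$ as you do. If you want the argument fully self-contained you would still need to include a proof of the multidimensional K\H{o}v\'ari--S\'os--Tur\'an bound; your sketch of the slicing induction from the planar case is the standard way to do it, and your alternative via down-shifting in place of Pajor is also standard and would work equally well.
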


We will need more complicated versions of Definition \ref{vcdef1} and Theorem \ref{CPT}. This extra complication comes from the fact that for this paper, we cannot work inside $T^{eq}$, as is done in \cite{CPT} (we will not even be working in a complete theory).  We now fix some notation.  Suppose $X$ is a set, and $k_1,\ldots, k_{\ell}\geq 1$ are integers.  Given $\abar_1\in X^{k_1},\ldots, \abar_{\ell}\in X^{k_{\ell}}$, let $\abar_1\ldots \abar_{\ell}$ denote the element of $X^{k_1+\ldots+k_{\ell}}$ which is the concatenation of the tuples $\abar_1,\ldots, \abar_{\ell}$.  Given nonempty sets $A_1\subseteq X^{k_1},\ldots, A_{\ell}\subseteq X^{k_{\ell}}$, let $A_1\ldots A_{\ell}:= \{\abar_1\ldots \abar_{\ell}: \abar_1\in A_1,\ldots, \abar_{\ell}\in A_{\ell}\}$.  Abusing notation slightly, we will write $\prod_{i=1}^{\ell}A_i$ for the set $A_1\ldots A_{\ell}$.  Observe $\prod_{i=1}^{\ell}A_i\subseteq X^r$,  where $r=k_1+\ldots+k_{\ell}$. We call $\prod_{i=1}^{\ell}A_i$ an \emph{$(\ell,r)$-box} in $X$.  If $|A_1|=\ldots=|A_{\ell}|=m$ for some $m\in \mathbb{N}$, then we say $\prod_{i=1}^{\ell}A_i$ has \emph{height $m$}.  By convention, for $r\geq 1$, a $(0,r)$-box of any height in $X$ is a singleton in $X^r$, and a $(0,0)$-box of any height in $X$ is the empty set.  Given any $0\leq \ell\leq r$, we will say a set $\mathbb{A}$ is an \emph{$(\ell,r)$-box} if there is some set $X$ such that $\mathbb{A}$ is an $(\ell,r)$-box in $X$.

\begin{definition}\label{vcdef2}
Suppose $X$ is a set, $1\leq \ell\leq r$, and $\calF\subseteq \calP(X^r)$. The \emph{$\VC_{\ell}$-dimension of $\calF$} is 
$$
\VC_{\ell}(\calF)=\sup \{ m\in \mathbb{N}: \text{ $\calF$ shatters an $(\ell,r)$-box of height $m$ in $X$}\}.
$$
 The \emph{$\ell$-dimensional shatter function} is $\pi_{\ell}(\calF, m)=\{|\calF \cap A|: A \text{ is an ($\ell$,$r$)-box in $X$ of height $m$}\}$.
\end{definition}

Theorem \ref{CPT} can be directly adapted to these definitions.

\begin{theorem}\label{CPT'}
Suppose $1\leq \ell \leq r$, $X$ is a set and $\calF\subseteq \calP(X^r)$. If $\VC_{\ell}(\calF)=d<\omega$, then there are constants $C=C(d)$ and $\epsilon=\epsilon(d)>0$ such that for all $m$, $\pi_{\ell}(\calF,m)\leq C 2^{m^{\ell-\epsilon}}$.
\end{theorem}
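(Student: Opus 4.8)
The plan is to reduce Theorem~\ref{CPT'} to Theorem~\ref{CPT} by a relabeling argument. Although an $(\ell,r)$-box $\prod_{i=1}^{\ell}A_i$ in $X$ lives inside $X^r$ and its blocks $A_i\subseteq X^{k_i}$ carry internal tuple structure, for the purpose of counting traces and detecting shattering only the cardinalities $|A_i|$ matter. So I would identify $\prod_{i=1}^{\ell}A_i$ with a genuine $\ell$-box in the sense of Definition~\ref{vcdef1} and quote the bound already established there.

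Concretely, fix $m$ and an $(\ell,r)$-box $\mathbb{A}=\prod_{i=1}^{\ell}A_i$ in $X$ of height $m$, with $A_i\subseteq X^{k_i}$ and $r=k_1+\cdots+k_{\ell}$. Since the blocks occupy disjoint coordinate positions, block-concatenation $A_1\cdots A_{\ell}\to X^r$ is injective, so $|\mathbb{A}|=m^{\ell}$; choosing bijections $\phi_i\colon A_i\to Y_i$ onto abstract $m$-element sets $Y_i$, the componentwise map $\phi\colon\mathbb{A}\to Y:=\prod_{i=1}^{\ell}Y_i$ is a bijection onto an $\ell$-box of height $m$. Let $\calG=\{\phi(F\cap\mathbb{A}):F\in\calF\}\subseteq\calP(Y)$, so that $|\calG|=|\calF\cap\mathbb{A}|$. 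Next I would check $\VC_{\ell}(\calG)\leq\VC_{\ell}(\calF)=d$: if $\calG$ shatters a sub-box $\prod_{i=1}^{\ell}Y_i'$ of $Y$ of height $m'$, then pulling back along $\phi^{-1}$ and using the identity $(\calF\cap\mathbb{A})\cap B=\calF\cap B$ for $B\subseteq\mathbb{A}$ shows $\calF$ shatters $\prod_{i=1}^{\ell}\phi_i^{-1}(Y_i')$, which is an $(\ell,r)$-box in $X$ of height $m'$; hence $m'\leq d$. Now Theorem~\ref{CPT} applied to $\calG\subseteq\calP(Y)$ gives constants $C=C(\VC_{\ell}(\calG))$ and $\epsilon=\epsilon(\VC_{\ell}(\calG))>0$ with $|\calF\cap\mathbb{A}|=|\calG|\leq\pi_{\ell}(\calG,m)\leq C2^{m^{\ell-\epsilon}}$ (the only sub-box of $Y$ of height $m$ is $Y$ itself, so $\pi_{\ell}(\calG,m)=|\calG|$). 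Since $\VC_{\ell}(\calG)\in\{0,1,\dots,d\}$, replacing $C$ by $\max_{d'\leq d}C(d')$ and $\epsilon$ by $\min_{d'\leq d}\epsilon(d')$ makes the bound depend only on $d$; taking the supremum over all $(\ell,r)$-boxes of height $m$ in $X$ yields $\pi_{\ell}(\calF,m)\leq C2^{m^{\ell-\epsilon}}$.

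There is no serious obstacle here, since the real content lives in Theorem~\ref{CPT}. The two points needing a little care are: (i) that the reduction neither loses nor inflates $\VC_{\ell}$, i.e.\ the transfer of shattering between $X$ and the abstract $\ell$-box in both directions, which is exactly where injectivity of block-concatenation and the restriction identity above are used; and (ii) that the constants furnished by Theorem~\ref{CPT} can be taken to depend only on $d$ rather than on the precise value $\VC_{\ell}(\calG)\leq d$, which is immediate as there are finitely many cases. An alternative would be to re-run the Chernikov--Palacin--Takeuchi induction directly in the $(\ell,r)$-box setting, but the relabeling reduction is cleaner and avoids duplicating their argument.
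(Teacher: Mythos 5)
Your proof is correct and follows essentially the same route as the paper: both reduce Theorem~\ref{CPT'} to Theorem~\ref{CPT} by recognizing $(\ell,r)$-boxes in $X$ as ordinary $\ell$-boxes in the sense of Definition~\ref{vcdef1}, the paper doing this globally by partitioning $\calF$ into the families $\calF(k_1,\ldots,k_\ell)=\calF\cap\prod_i X^{k_i}$ indexed by compositions of $r$, while you do it locally by transporting each individual box via a componentwise bijection to an abstract $\ell$-box. Your explicit step replacing $C,\epsilon$ by $\max_{d'\leq d}C(d')$, $\min_{d'\leq d}\epsilon(d')$ to get dependence only on $d$ is a small precision the paper leaves implicit.
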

\begin{proof}
Observe that any $(\ell,r)$-box in $X$ is a sub-box of $\prod_{i=1}^{\ell}X^{k_i}$, for some $k_1,\ldots, k_{\ell}\geq 1$ with $k_1+\ldots +k_{\ell}=r$.  Given $k_1,\ldots, k_{\ell}\geq 1$ such that $k_1+\ldots +k_{\ell}=r$, let $\calF(k_1,\ldots, k_{\ell})=\calF\cap \prod_{i=1}^{\ell}X^{k_i}$. Our observation implies that $\calF$ shatters an $(\ell,r)$-box of height $m$ in $X$ if and only if $\calF(k_1,\ldots, k_{\ell})$ shatters a sub-box of $\prod_{i=1}^{\ell}X^{k_i}$ of height $m$, for some $k_1,\ldots, k_{\ell}\geq 1$ with $k_1+\ldots +k_{\ell}=r$. Consequently,
\begin{align}
\pi_{\ell}(\calF,m)&=\max\{ \pi_{\ell}(\calF(k_1,\ldots, k_{\ell}),m): k_1,\ldots, k_{\ell}\geq 1, k_1+\ldots +k_{\ell}=r\}\text{ and }\label{1}\\
\VC_{\ell}(\calF)&=\max\{ \VC_{\ell}(\calF(k_1,\ldots, k_{\ell})): k_1,\ldots, k_{\ell}\geq 1, k_1+\ldots +k_{\ell}=r\},\label{2}
\end{align}
where the left-hand sides are computed as in Definition \ref{vcdef2} and the right-hand sides are computed as in Definition \ref{vcdef1}.  By assumption, $\VC_{\ell}(\calF)\leq d$, so (\ref{2}) implies that for all $k_1,\ldots, k_{\ell}\geq 1$ with $k_1+\ldots +k_{\ell}=r$, $\VC_{\ell}(\calF(k_1,\ldots, k_{\ell}))\leq d$.  Therefore, by Theorem \ref{CPT}, there are $C=C(d)$ and $\epsilon=\epsilon(d)>0$ such that for all $m$, $\pi_{\ell}(\calF(k_1,\ldots, k_{\ell}),m)\leq C 2^{m^{\ell-\epsilon}}$.  Combining this with (\ref{1}) implies $\pi_{\ell}(\calF,m)\leq C 2^{m^{\ell-\epsilon}}$ holds for all $m$.
\end{proof}

Note that $\VC_{1}$-dimension is the same as $\VC$-dimension.  Observe that in the notation of Definition \ref{vcdef2}, for all $m$, $\pi_{\ell}(\calF,m)\leq 2^{m^{\ell}}$, and $\VC_{\ell}(\calF)\geq m$ if and only if $\pi_{\ell}(\calF,m)=2^{m^{\ell}}$.  We will be particularly interested in the $\VC_{\ell}$-dimension of families of sets defined by formulas in an $\calL$-structure. Given a formula $\varphi(\xbar; \ybar)$, an $\mathcal{L}$-structure $\calM$, and $\bbar \in M^{|\xbar|}$, let 
$$
\varphi(\bbar;\calM)=\{\abar\in M^{|\ybar|}:\calM\models \varphi(\bbar; \abar)\}\qquad \hbox{ and }\qquad \calF_{\varphi}(\calM)=\{\varphi(\bbar; \calM): \bbar \in M^{|\xbar|}\}.
$$
Note $\calF_{\varphi}(\calM)\subseteq \calP(M^{|\ybar|})$.  If $A\subseteq M^{|\ybar|}$, we say $\varphi$ \emph{shatters $A$} if $\calF_{\varphi}(\calM)$ does. Given $1\leq \ell \leq |\ybar|$, set $\VC_{\ell}(\varphi,\calM)=\VC_{\ell}(\calF_{\varphi}(\calM))$. Then if $\calH$ is a hereditary $\calL$-property, set
$$
\VC_{\ell}(\varphi,\calH)=\sup\{\VC_{\ell}(\varphi, \calM): \calM\in \calH\}.
$$
\noindent We now define the $\VC_{\ell}$-dimension of a hereditary $\calL$-property, for $\ell \geq 1$.
\begin{definition}\label{ldep}
Suppose $\ell\geq 1$, and $\calH$ is a hereditary $\calL$-property.  Then 
$$
\VC_{\ell}(\calH)=\sup\{\VC_{\ell}(\varphi,\calH): \varphi(\xbar;\ybar)\in \calL\text{ is quantifier-free}\},
$$
and we say $\calH$ is \emph{$\ell$-dependent} if for all quantifier-free formulas $\varphi(\xbar;\ybar)$, $\VC_{\ell}(\varphi,\calH)<\omega$.
\end{definition}

Note that in Definition \ref{ldep}, we define $\VC_{\ell}(\calH)$ in terms of $\VC_{\ell}(\varphi, \calH)$ for \emph{quantifier-free} $\varphi$.  Because we are dealing with classes of finite structures, this turns out to be the appropriate notion.  We now explain how this is related to the $\VC_{\ell}$-dimension of a complete first-order theory and the notion of $\ell$-dependence.  Suppose $T$ is a complete $\calL$-theory.  Given a formula, $\varphi(\xbar;\ybar)$, the \emph{$\VC_{\ell}$-dimension of $\varphi$ in $T$} is $\VC_{\ell}(\varphi,T):=\VC_{\ell}(\varphi, \calM)$, where $\calM$ is a monster model of $T$ and $\VC_{\ell}(\varphi,\calM)$ is computed precisely as described above.  The theory $T$ is \emph{$\ell$-dependent} if $\VC_{\ell}(\varphi,T)<\omega$ for all $\varphi\in \calL$. This can be related to Definition \ref{ldep} as follows.  Let $\calH(T)$ be the age of $\calM$ (i.e. the class of finite $\calL$-structures which embed into $\calM$).  Then for any quantifier-free $\varphi$, $\VC_{\ell}(\varphi,\calH(T))=\VC_{\ell}(\varphi,T)$.  Clearly if $T$ is $\ell$-dependent, then so is $\calH(T)$.  However, the converse will not hold if all quantifier-free formulas have finite $\VC_{\ell}$-dimension in $T$, but there is a $\varphi$ with quantifiers such that $\VC_{\ell}(\varphi,T)=\omega$.  Further, many hereditary $\calL$-properties are not ages (recall that if $\calL$ is finite and relational, then a hereditary $\calL$-property is an age if and only if it has the joint embedding property \cite{CAMERON199149}).  Thus, while one can view Definition \ref{ldep} as a version of $\ell$-dependence adapted to the setting of hereditary $\calL$-properties, it differs in fundamental ways from the notion of the $\VC_{\ell}$-dimension of a complete theory.

\subsection{$\VC_{\ell}^*$-dimension}

In this subsection we define $\VC_{\ell}^*$-dimension, a dual version of $\VC_{\ell}$-dimension.  This is necessary because directly generalizing $\VC_{\ell}$-dimension to the case when $\ell=0$ does not give us a useful notion.  Indeed, for any formula $\varphi(\xbar)$ and $\calL$-structure $\calM$, $\varphi$ trivially shatters a $(0,0)$-box (i.e. the empty set).  We would like to point out that $\VC_{\ell}^*$-dimension is stronger than the dual version of $\VC_{\ell}$-dimension appearing in \cite{CPT}.

We now fix some notation.  Suppose $\varphi(\xbar; \ybar)$ is a formula, $X$ is a set, and $\mathbb{A}\subseteq X^{|\ybar|}$.  A \emph{$\varphi$-type over $\mathbb{A}$ in the variables $\xbar$} is a maximal consistent subset of $\{\varphi(\xbar; \abar)^i: \abar \in \mathbb{A}, i\in \{0,1\}\}$ (where $\varphi^0=\varphi$ and $\varphi^1=\neg \varphi$).  Given an integer $n$, $S_n^{\emptyset}(\mathbb{A})$ is the set of complete types in the language of equality, using $n$ variables, and with parameters in $\mathbb{A}$.  Given $p$ in $S_{\varphi}(\mathbb{A})$ or $S_{n}^{\emptyset}(\mathbb{A})$, we say $p$ is \emph{realized} in an $\calL$-structure $\calM$ if $\mathbb{A}\subseteq M^{|\ybar|}$, and there is $\abar\in M^{|\xbar|}$ such that $\calM\models p(\abar)$.  If $\calH$ is a hereditary $\calL$-property, $S^{\calH}_{\varphi}(\mathbb{A})$ is the set of complete $\varphi$-types over $\mathbb{A}$ which are realized in some $\calM\in \calH$. 

\begin{definition}\label{def}
Suppose $\calH$ is a hereditary $\calL$-property, $m\geq 1$, $\varphi(\xbar; \ybar)$ is a formula, $X$ is a set, and $\mathbb{A}\subseteq X^{|\ybar|}$. Then $S^{\calH}_{\varphi,m}(\mathbb{A})$ is the set of all $\varphi$-types of the form $p_1(\xbar_1)\cup \ldots \cup p_m(\xbar_m)$, satisfying
\begin{enumerate}
\item For each $i\in [m]$, $p_i(\xbar_i)\in S^{\calH}_{\varphi}(\mathbb{A})$, and
\item There is $\calM\in \calH$ and \emph{pairwise distinct} $\abar_1,\ldots, \abar_m\in M^{|\xbar|}$ such that $\calM\models p_1(\abar_1)\cup \ldots \cup p_m(\abar_m)$.
\end{enumerate}
Given $\rho\in S_{2|\xbar|}^{\emptyset}(\mathbb{A})$, $S^{\calH}_{\varphi,m}(\mathbb{A},\rho)$ is the set of $p_1(\xbar_1)\cup \ldots \cup p_m(\xbar_m)\in S^{\calH}_{\varphi,m}(\mathbb{A})$ such that there is $\calM\in \calH$ and \emph{pairwise distinct} $\abar_1,\ldots, \abar_m\in M^{|\xbar|}$ with $\calM\models p_1(\abar_1)\cup \ldots \cup p_m(\abar_m)\cup \bigcup_{1\leq i\neq j\leq m}\rho(\abar_i,\abar_j)$.
\end{definition}
\noindent Observe that in the notation of Definition \ref{def}, for any $(\ell,|\ybar|)$-box $\mathbb{A}$ of height $m$, $|S^{\calH}_{\varphi}(\mathbb{A})|\leq 2^{m^{\ell}}$ and for all $\rho\in S^{\emptyset}_{2|\xbar|}(\mathbb{A})$, $|S^{\calH}_{\varphi, m}(\mathbb{A},\rho)|\leq |S^{\calH}_{\varphi}(\mathbb{A})|^{m}$. Consequently, $|S^{\calH}_{\varphi,m}(\mathbb{A},\rho)|\leq 2^{m^{\ell+1}}$.  We are now ready to define the $\VC_{\ell}^*$-dimension of a hereditary $\calL$-property, for $\ell\geq 0$.
  
\begin{definition}
Suppose $\varphi(\xbar; \ybar)$ is a formula, $\calH$ is a hereditary $\calL$-property, and $0\leq \ell \leq |\ybar|$.  Then
\begin{align*}
\VC^*_{\ell}(\varphi, \calH)&=\sup \{m\in \mathbb{N}:\text{for some $(\ell,|\ybar|)$-box $\mathbb{A}$ of height $m$ and $\rho\in S^{\emptyset}_{2|\xbar|}(\mathbb{A})$, $|S^{\calH}_{\varphi,m}(\mathbb{A},\rho)|=2^{m^{\ell+1}}\}$},
\end{align*}
and $\VC^*_{\ell}(\calH)=\sup\{\VC^*_{\ell}(\varphi,\calH): \varphi(\xbar;\ybar)\in \calL \text{ is quantifier-free}\}$.
\end{definition}

Throughout we will use the notation $\VC_{\ell}^*(\calH)=\infty$ instead of $\VC_{\ell}^*(\calH)=\omega$ (and similarly for other dimensions). We will frequently use the following observation.

\begin{observation}\label{keyob}
For all $\ell\geq 0$ and formulas $\varphi(\xbar;\ybar)$, $\VC^*_{\ell}(\varphi, \calH)\geq m$ if and only if there is an $(\ell,|\ybar|)$-box $\mathbb{A}$ of height $m$ and $\rho\in S_{2|\xbar|}^{\emptyset}(\mathbb{A})$ such that $|S^{\calH}_{\varphi}( \mathbb{A})|= 2^{m^{\ell}}$ and for all $(p_1,\ldots, p_m)$ in $S^{\calH}_{\varphi}(\mathbb{A})^m$, $p_1(\xbar_1)\cup \ldots \cup p_m(\xbar_m)\in S^{\calH}_{\varphi,m}( \mathbb{A},\rho)$.
\end{observation}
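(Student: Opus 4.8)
The plan is to prove Observation \ref{keyob} by unwinding the definitions; the only real content is two elementary counting bounds. Recall the two inequalities noted after Definition \ref{def}: for any $(\ell,|\ybar|)$-box $\mathbb{A}$ of height $m$ we have $|S^{\calH}_\varphi(\mathbb{A})|\le 2^{m^\ell}$, since $|\mathbb{A}|=m^\ell$ and a complete $\varphi$-type over $\mathbb{A}$ is determined by a function $\mathbb{A}\to\{0,1\}$; and for every $\rho\in S^\emptyset_{2|\xbar|}(\mathbb{A})$ we have $|S^{\calH}_{\varphi,m}(\mathbb{A},\rho)|\le |S^{\calH}_\varphi(\mathbb{A})|^m$. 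For the second bound I would exhibit the map $(p_1,\dots,p_m)\mapsto p_1(\xbar_1)\cup\dots\cup p_m(\xbar_m)$ on $S^{\calH}_\varphi(\mathbb{A})^m$: it is injective (the $p_i(\xbar_i)$ use disjoint blocks of variables, so if $p_i\neq q_i$ for some $i$ the two unions already disagree on a formula in the block $\xbar_i$), and by Definition \ref{def} every element of $S^{\calH}_{\varphi,m}(\mathbb{A},\rho)$ lies in its image; hence $|S^{\calH}_{\varphi,m}(\mathbb{A},\rho)|=|S^{\calH}_\varphi(\mathbb{A})|^m$ holds exactly when every union $p_1(\xbar_1)\cup\dots\cup p_m(\xbar_m)$ with $(p_1,\dots,p_m)\in S^{\calH}_\varphi(\mathbb{A})^m$ belongs to $S^{\calH}_{\varphi,m}(\mathbb{A},\rho)$.

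Chaining the two bounds gives $|S^{\calH}_{\varphi,m}(\mathbb{A},\rho)|\le |S^{\calH}_\varphi(\mathbb{A})|^m\le 2^{m^{\ell+1}}$, so for a fixed height $m$ the equality $|S^{\calH}_{\varphi,m}(\mathbb{A},\rho)|=2^{m^{\ell+1}}$ forces both inequalities to be equalities, that is, it is equivalent to: $|S^{\calH}_\varphi(\mathbb{A})|=2^{m^\ell}$ and every union $p_1(\xbar_1)\cup\dots\cup p_m(\xbar_m)$, $(p_1,\dots,p_m)\in S^{\calH}_\varphi(\mathbb{A})^m$, lies in $S^{\calH}_{\varphi,m}(\mathbb{A},\rho)$. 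Since $\VC^*_\ell(\varphi,\calH)\ge m$ means, by the definition of the supremum, that the condition $|S^{\calH}_{\varphi,m'}(\mathbb{A},\rho)|=2^{(m')^{\ell+1}}$ holds at some height $m'\ge m$, I would finish with a monotonicity argument: from a witnessing box $\mathbb{A}'$ of height $m'$ and $\rho'$, pass to a sub-box $\mathbb{A}\subseteq\mathbb{A}'$ of height $m$ with $\rho$ the restriction of $\rho'$. Every complete $\varphi$-type over $\mathbb{A}$ extends to one over $\mathbb{A}'$, which is realized in $\calH$ (all of them are), and the same realization witnesses the restriction, so $|S^{\calH}_\varphi(\mathbb{A})|=2^{m^\ell}$; and given $(p_1,\dots,p_m)\in S^{\calH}_\varphi(\mathbb{A})^m$, extend each $p_i$ to $p_i'\in S^{\calH}_\varphi(\mathbb{A}')$, pad to a length-$m'$ tuple by repeating $p_1'$, apply the height-$m'$ hypothesis to get $\calM\in\calH$ and pairwise distinct $\abar_1,\dots,\abar_{m'}$ realizing these types with $\rho'$ holding between all pairs, and keep $\abar_1,\dots,\abar_m$. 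The reverse implication, that a height-$m$ box satisfying the two conditions forces $\VC^*_\ell(\varphi,\calH)\ge m$, is immediate from the definition.

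The only step needing any care is this last monotonicity argument — in particular, checking that the single restricted type $\rho$ works for all tuples $(p_1,\dots,p_m)$ simultaneously and that the pairwise-distinctness requirement in Definition \ref{def} survives the padding and truncation. Everything else is a mechanical consequence of the definitions and the chain $|S^{\calH}_{\varphi,m}(\mathbb{A},\rho)|\le |S^{\calH}_\varphi(\mathbb{A})|^m\le 2^{m^{\ell+1}}$.
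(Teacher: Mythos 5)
Your proof is correct. The paper offers no argument for this Observation, and the heart of what you wrote — the chain $|S^{\calH}_{\varphi,m}(\mathbb{A},\rho)|\le |S^{\calH}_\varphi(\mathbb{A})|^m\le 2^{m^{\ell+1}}$, with equality at $2^{m^{\ell+1}}$ forcing both factors to be extremal via the injective map $(p_1,\dots,p_m)\mapsto p_1(\xbar_1)\cup\dots\cup p_m(\xbar_m)$ — is precisely the intended unwinding, and it is how the Observation is actually deployed each time it is cited (e.g.\ in the proof of Lemma~\ref{shlem}(\ref{d'}) and in Theorem~\ref{eqth}).

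Your closing monotonicity step is the one place you go beyond what the paper acknowledges, and you are right that it is needed if one reads ``$\VC^*_\ell(\varphi,\calH)\ge m$'' literally as a statement about the supremum: that only guarantees a witnessing height $m'\ge m$, so downward closure of the set of witnessing heights does require a word. Your pad-and-restrict argument — fix a sub-box $\mathbb{A}\subseteq\mathbb{A}'$ of height $m$, take $\rho=\rho'{\upharpoonright}_{\mathbb{A}}$, note that since all $2^{(m')^\ell}$ types over $\mathbb{A}'$ are realized every type over $\mathbb{A}$ extends to a realized one, and lift a length-$m$ tuple by padding it to length $m'$ before applying the height-$m'$ hypothesis — is clean and correct, and notably does not need the $K\gg N\ge m$ hypothesis that Lemma~\ref{shlem}(\ref{d'}) carries. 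In practice the paper sidesteps this by only invoking the Observation at heights ($K$, $N$, $d$) that are chosen freely and can simply be taken to be witnessing heights; but stating the downward closure explicitly, as you do, is the more careful reading, and your proof of it is sound.
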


On the other hand, note that for all $\ell>0$ and formulas $\varphi(\xbar;\ybar)$, $\VC_{\ell}(\varphi, \calH)\geq m$ if and only if there is an $(\ell,|\ybar|)$-box $\mathbb{A}$ of height $m$ such that $|S^{\calH}_{\varphi}( \mathbb{A})|= 2^{m^{\ell}}$.  Therefore $\VC_{\ell}^*(\varphi,\calH)\geq m$ is a stronger statement than $\VC_{\ell}(\varphi,\calH)\geq m$. 

We now make a few remarks on our choice of definitions.  We defined $\VC_{\ell}^*$-dimension using $S^{\calH}_{\varphi,m}(\mathbb{A},\rho)$ for $\rho \in S_{2|\xbar|}^{\emptyset}(\mathbb{A})$ in order to avoid pathologies in the case when $\ell=0$.  In particular, for any non-trivial hereditary $\calL$-property $\calH$ with $\calH_{2n}\neq \emptyset$, $|S^{\calH}_{x=y,n}(\emptyset)|=2^n$.  Indeed, $\calH_{2n}\neq \emptyset$ implies that for any $\sigma\in \{0,1\}^n$, $S^{\calH}_{x=y,n}(\emptyset)$ contains $\{(x_i=y_i)^{\sigma(i)}: 1\leq i\leq n\}$.  Therefore if we defined $\VC_0^*$-dimension using $|S^{\calH}_{\varphi,m}(\mathbb{A})|$ instead of $|S^{\calH}_{\varphi,m}(\mathbb{A},\rho)|$ for some $\rho\in S_{2|\xbar|}^{\emptyset}(\mathbb{A})$, every hereditary $\calL$-property of interest to us would satisfy $\VC_0^*(x_1=x_2,\calH)=\infty$.  Our definition avoids this undesirable behavior when $\ell=0$.  Further, we will prove in Section \ref{eqsection} that for any hereditary $\calL$-property $\calH$ and $\ell>0$, $\VC_{\ell}^*(\calH)=\infty$ if and only if $\VC_{\ell}(\calH)=\infty$.  In light of this, we may extend Definition \ref{ldep} to all $\ell\geq 0$ by saying a hereditary $\calL$-property $\calH$ is \emph{$\ell$-dependent} if $\VC^*_{\ell}(\varphi,\calH)<\infty$ for all quantifier-free $\varphi$.

\section{Technical Lemmas}\label{tech}

In this section we present two technical lemmas which we will use in the proofs of our main results. Since we are interested in counting, it is often important to distinguish between tuples and their underlying sets.   For this reason we will often denote sets of tuples using bold face letters, and the corresponding underlying sets using non-bold letters. Objects which are tuples will always have bars over them. {\bf For the rest of the paper $\calL$ is a fixed finite relational language with maximum arity $r\geq 1$, and $\calH$ is a hereditary $\calL$-property.}   For the rest of the paper, ``formula'' always means quantifier-free formula.  Since $\calH$ is now fixed, we will from here on omit the superscripts $\calH$ from the notation defined in Definition \ref{def}.   

 The first result of this section is Lemma \ref{indlem} below.  Parts (a) and (b) of Lemma \ref{indlem} give quantitative bounds for the size of indiscernible sets in the language of equality, and part (c) of Lemma \ref{indlem} is an easy but useful counting fact.  The proof of Lemma \ref{indlem} is straightforward and appears in the appendix.

\begin{lemma}\label{indlem}
Suppose $X$ is a set, $s,t\in \mathbb{N}$, $\mathbb{B}\subseteq X^t$ is finite, and $B$ is the underlying set of $\mathbb{B}$.  Then the following hold.
\begin{enumerate}[(a)]
\item $|S_s^{\emptyset}(\mathbb{B})|\leq 2^{s\choose 2}(|B|+1)^s$.
\item There is $\mathbb{B}'\subseteq \mathbb{B}$ which is an indiscernible subset of $X^t$ in the language of equality satisfying $|\mathbb{B}'|\geq \Big(|\mathbb{B}|/2^{t\choose 2}\Big)^{1/2^t}$.
\item $|B|\leq t|\mathbb{B}|$ and $|\mathbb{B}|^{1/t}\leq |B|$.
\end{enumerate}
\end{lemma}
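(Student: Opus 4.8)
The three parts are essentially independent combinatorial estimates, and I would prove them in the order (c), (a), (b).

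\textbf{Part (c).} This is the easiest. The underlying set $B$ of $\mathbb{B}\subseteq X^t$ is the union of the $t$ coordinate projections of $\mathbb{B}$, so $|B|\le \sum_{i=1}^t|\pi_i(\mathbb{B})|\le t|\mathbb{B}|$. For the other inequality, every tuple $\bbar\in\mathbb{B}$ has all its entries in $B$, so $\mathbb{B}\subseteq B^t$, hence $|\mathbb{B}|\le |B|^t$ and $|\mathbb{B}|^{1/t}\le|B|$.

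\textbf{Part (a).} A complete type $p\in S_s^{\emptyset}(\mathbb{B})$ in the language of equality, in variables $x_1,\dots,x_s$ with parameters from $\mathbb{B}$, is determined by two pieces of data: first, the equivalence relation on $\{x_1,\dots,x_s\}$ recording which variables $p$ forces equal — there are at most $2^{\binom{s}{2}}$ choices for a symmetric reflexive relation on $s$ elements, which crudely bounds the number of equivalence relations; second, for each of the resulting classes, the type must specify which element of $B\cup\{\text{"new"}\}$ that class is realized by, i.e. either it equals one of the $|B|$ parameters or it is distinct from all of them — at most $(|B|+1)$ choices per class, and at most $s$ classes, giving $(|B|+1)^s$. (One has to note consistency: once a class is pinned to a parameter, the equality/inequality relations among variables and parameters are determined; and distinct classes pinned to the same parameter would be inconsistent, but we are only counting an upper bound, so over-counting is harmless.) Multiplying gives $|S_s^{\emptyset}(\mathbb{B})|\le 2^{\binom{s}{2}}(|B|+1)^s$.

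\textbf{Part (b).} This is the Ramsey-type extraction and is the main content. For $s\le t$ I would find a large subset of $\mathbb{B}$ on which the $s$-variable $\emptyset$-type of increasing subtuples is constant; iterating over $s=1,\dots,t$ then yields an indiscernible set. Concretely: order $\mathbb{B}$ arbitrarily as a sequence. By part (a) applied with $X$ itself and with parameter set empty — more precisely, there are at most $N_s:=2^{\binom{st}{2}}$ possible $\emptyset$-types of an $s$-tuple of elements of $X^t$ (each such $s$-tuple is an $st$-tuple of elements of $X$, and its $\emptyset$-type is determined by the equality pattern among the $st$ coordinates, of which there are at most $2^{\binom{st}{2}}$). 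Colour each $s$-element subset of $\mathbb{B}$ by the $\emptyset$-type of the corresponding increasing tuple, and apply the hypergraph Ramsey / pigeonhole argument: partition into monochromatic pieces and keep the largest, losing a factor $N_s$ at stage $s$. Running $s$ from $1$ to $t$, we pass from $\mathbb{B}$ to an indiscernible $\mathbb{B}'$ with
\[
|\mathbb{B}'|\ \ge\ \frac{|\mathbb{B}|}{\prod_{s=1}^{t}N_s}\ \ge\ \frac{|\mathbb{B}|}{2^{\sum_{s=1}^t\binom{st}{2}}},
\]
which one then has to check is at least $\big(|\mathbb{B}|/2^{\binom{t}{2}}\big)^{1/2^t}$. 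The cleanest way to get exactly the stated bound is to use the standard recursive estimate for the "Ramsey-extraction" of an indiscernible sequence: at each of the $2^t$ possible equality patterns on a $t$-tuple (there are at most $2^{\binom{t}{2}}$ of these, but organizing by subsets of coordinates gives the exponent $2^t$), refine and take a root; unwinding the recursion $f(\mathbb{B})\ge (f\text{-value}/2^{\binom{t}{2}})$-type inequality $t$ times produces the $1/2^t$ exponent. The main obstacle is bookkeeping the constants so that the final bound matches $\big(|\mathbb{B}|/2^{\binom{t}{2}}\big)^{1/2^t}$ exactly rather than some weaker expression; this is why the authors relegate it to the appendix. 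Since it is only used with $t\le r$ fixed, any bound of the form $|\mathbb{B}|^{1/c(t)}-d(t)$ would in fact suffice downstream, but I would still aim to match the stated inequality by a careful induction on $t$.
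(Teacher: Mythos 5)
Parts (a) and (c) of your proposal are fine and essentially match the paper's argument. Part (b) has a genuine gap. You propose to colour the $s$-element subsets of $\mathbb{B}$ by their $\emptyset$-type and then, for each $s=1,\dots,t$, ``partition into monochromatic pieces and keep the largest, losing a factor $N_s$ at stage $s$.'' This is not how Ramsey extraction works when $s\ge 2$: the $s$-subsets of $\mathbb{B}$ do not partition so that keeping the largest colour class yields a monochromatic \emph{subset of $\mathbb{B}$} with a constant-factor loss; hypergraph Ramsey on $s$-subsets costs an $(s-1)$-fold iterated logarithm, not a factor of $N_s$. Your intermediate bound $|\mathbb{B}'|\ge|\mathbb{B}|/\prod_s N_s$ is therefore simply false, and the closing paragraph (``take a root,'' ``unwinding the recursion'') gestures at the shape of the correct answer without supplying an argument.

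The paper's proof of (b) is not a Ramsey argument at all; it exploits the rigidity of the equality language. A single application of part (a) yields $\mathbb{B}_0\subseteq\mathbb{B}$ of size at least $|\mathbb{B}|/2^{t\choose 2}$ in which all tuples share the same internal equality type. Then the $t$ coordinates are processed one at a time: at coordinate $i$, either some single value appears as the $i$-th entry of at least $|Y_{i-1}|^{1/2}$ of the remaining tuples (keep those, so the $i$-th entry is constant on $Y_i$), or at least $|Y_{i-1}|^{1/2}$ distinct values occur as $i$-th entries (keep one tuple per value, so the $i$-th entries are pairwise distinct on $Y_i$). Each step loses only a square root, and after $t$ steps one gets the $1/2^t$ exponent. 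Note also that your fallback remark --- that ``any bound of the form $|\mathbb{B}|^{1/c(t)}-d(t)$ would suffice downstream'' --- is not safe: in the proof of Theorem \ref{eqth} the lemma is used to pass from $|\mathbb{D}'|\ge C2^{K^{\ell-\epsilon/10}}$ to $|\mathbb{D}''|\ge C2^{K^{\ell-\epsilon/5}}$, which genuinely requires a fixed-power lower bound; a logarithmic or iterated-logarithmic loss, as a Ramsey argument would produce, would collapse the exponential and break that step.
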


If $0<\ell\leq r$ and $\mathbb{A}=\prod_{i=1}^{\ell}A_i$ is an $(\ell,r)$-box, then a \emph{sub-box of $\mathbb{A}$} is an $(\ell,r)$-box of the form $\prod_{i=1}^{\ell}A_i'$ where for each $1\leq i\leq \ell$, $A'_i\subseteq A_i$ is nonempty.  By convention, for any $r\geq 0$, the only sub-box of a $(0,r)$-box is itself. Our next result of this section is Lemma \ref{shlem} below, which gives us information about types over sub-boxes.

\begin{lemma}\label{shlem}
Let $\varphi(\xbar; \ybar)$ be a formula, and let $\ell, K, N,m$ be integers satisfying $K>>N\geq m\geq 1$, and $0\leq \ell$.  If $\mathbb{A}$ is an $(\ell,|\ybar|)$-box of height $K$ satisfying $|S_{\varphi}(\mathbb{A})|=2^{K^{\ell}}$, then for any sub-box $\mathbb{A}'\subseteq \mathbb{A}$ of height $m$, the following hold.
\begin{enumerate}[(a)]
\item The underlying set of $\mathbb{A}'$ has size at most $|\ybar|m$.\label{a'}
\item $|S_{\varphi}(\mathbb{A}')|=2^{m^{\ell}}$.\label{b'}
\item Suppose $\ell>0$, $\calM$ is an $\calL$-structure, and $\mathbb{D}\subseteq M^{|\xbar|}$ contains one realization of every element of $S_{\varphi}(\mathbb{A})$.  Then $\mathbb{D}$ contains at least $N$ realizations of every element of $S_{\varphi}(\mathbb{A}')$.\label{c'}
\item If $|S_{\varphi,K}(\mathbb{A},\rho)|= 2^{K^{\ell+1}}$ for some $\rho \in S^{\emptyset}_{2|\xbar|}(\mathbb{A})$, then $|S_{\varphi,m}(\mathbb{A}',\rho{\upharpoonright}_{\mathbb{A}'})|=2^{m^{\ell+1}}$, and there is $\calM\in \calH$ and $\mathbb{D}\subseteq M^{m|\xbar|}$ such that $\mathbb{D}$ contains one realization of every element of $S_{\varphi,m}(\mathbb{A}',\rho{\upharpoonright}_{\mathbb{A}'})$, and $M$ contains at least $N$ elements not in $A'$ or in any element of $\mathbb{D}$.\label{d'}
\end{enumerate}
\end{lemma}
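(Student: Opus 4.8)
The plan is to prove the four parts essentially in order, since each later part builds on the earlier ones. Part \ref{a'} is the simplest: a sub-box $\mathbb{A}'=\prod_{i=1}^\ell A'_i$ of height $m$ has at most $\ell m$ tuples, each living in $X^{k_i}$ for some $k_1+\dots+k_\ell=|\ybar|$, so its underlying set has size at most $\sum_i k_i m=|\ybar|m$; when $\ell=0$ the sub-box is a singleton and the bound is trivial. For part \ref{b'}, the inequality $|S_\varphi(\mathbb{A}')|\leq 2^{m^\ell}$ is automatic from the remark after Definition \ref{def}. For the reverse inequality one uses that $\mathbb{A}'$ is a sub-box of $\mathbb{A}$: every $\varphi$-type over the smaller box extends to some $\varphi$-type over $\mathbb{A}$, but this only gives surjectivity of a restriction map, not injectivity, so the counting direction that matters has to be argued differently. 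Here is where I expect to invoke the hypothesis $|S_\varphi(\mathbb{A})|=2^{K^\ell}$ together with $K\gg N\geq m$: since $\calF_\varphi$ (in the appropriate structures witnessing types in $\calH$) shatters the full $(\ell,|\ybar|)$-box of height $K$ in the sense that all $2^{K^\ell}$ patterns occur, and $\mathbb{A}'$ is a sub-box obtained by selecting $m$ coordinates in each factor, every one of the $2^{m^\ell}$ patterns on $\mathbb{A}'$ is the restriction of a pattern that is realized in some $\calM\in\calH$, hence is itself realized; so $|S_\varphi(\mathbb{A}')|=2^{m^\ell}$.

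For part \ref{c'}, assume $\ell>0$ and let $\mathbb{D}\subseteq M^{|\xbar|}$ contain one realization $\dbar_p$ of every $p\in S_\varphi(\mathbb{A})$. Fix $q\in S_\varphi(\mathbb{A}')$; by part \ref{b'} there are exactly $2^{m^\ell}$ such $q$. I want to count the $p\in S_\varphi(\mathbb{A})$ whose restriction to $\mathbb{A}'$ is $q$: the number of extensions of $q$ to a $\varphi$-type over $\mathbb{A}$ that are still realized in some structure in $\calH$ is at least $2^{K^\ell}/2^{m^\ell}=2^{K^\ell-m^\ell}$ — this uses that the restriction map $S_\varphi(\mathbb{A})\to S_\varphi(\mathbb{A}')$ is surjective (each of its $2^{m^\ell}$ values is attained) onto a set all of whose fibers, summed, have size $2^{K^\ell}$, and here I should argue the fibers are in fact \emph{all} of equal size $2^{K^\ell-m^\ell}$ using the product structure of the box, i.e. the freedom in the remaining $K-m$ coordinates of each factor is independent of the behaviour on $\mathbb{A}'$. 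Since $K\gg N$, this quantity $2^{K^\ell-m^\ell}$ is $\geq N$, so $\mathbb{D}$, containing $\dbar_p$ for each such $p$, contains at least $N$ realizations of $q$. The main subtlety here is making precise that the fibers of the restriction map have equal size; this is where the ``$\gg$'' hypotheses and the box structure do the work, and it is the technical heart of parts \ref{b'}--\ref{c'}.

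For part \ref{d'}, suppose $|S_{\varphi,K}(\mathbb{A},\rho)|=2^{K^{\ell+1}}$ for some $\rho\in S^\emptyset_{2|\xbar|}(\mathbb{A})$. By Observation \ref{keyob} this means every $m$-fold (indeed $K$-fold) union of types from $S_\varphi(\mathbb{A})$, with distinct realizations arranged according to $\rho$, is consistent in some $\calM\in\calH$. Restricting to the sub-box $\mathbb{A}'$: using part \ref{b'}, $|S_\varphi(\mathbb{A}')|=2^{m^\ell}$, and every $m$-tuple of types from $S_\varphi(\mathbb{A}')$ lifts (coordinatewise, by the fiber argument of part \ref{c'}) to an $m$-tuple of types from $S_\varphi(\mathbb{A})$, whose corresponding union is realized with $\rho$-pattern in some $\calM\in\calH$; restricting the realizations to the $\mathbb{A}'$-coordinates realizes the desired union with $\rho{\upharpoonright}_{\mathbb{A}'}$, giving $|S_{\varphi,m}(\mathbb{A}',\rho{\upharpoonright}_{\mathbb{A}'})|=2^{m^{\ell+1}}$ again via Observation \ref{keyob}. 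For the ``room'' statement, I need a single $\calM\in\calH$ and a $\mathbb{D}\subseteq M^{m|\xbar|}$ realizing all $2^{m^{\ell+1}}$ of these $m$-fold types, plus $N$ spare elements. To get this in one structure, take a realization in some $\calM_0\in\calH$ of the enormous union indexed by the full height-$K$ box (which exists since $|S_{\varphi,K}(\mathbb{A},\rho)|=2^{K^{\ell+1}}$, so in particular the single type that is the union over \emph{all} $K$-tuples is realized — more carefully, one realizes, for a fixed large finite set of the relevant $m$-fold patterns simultaneously, a structure containing them all, which is possible because $\calH$ is closed under substructure and the required number of realizations is bounded by $2^{m^{\ell+1}}\cdot m\cdot|\xbar|$, a constant compared to $K$); then since $K\gg N$ and the underlying set of $\mathbb{A}$ has size $\leq |\ybar|K$, after throwing in $\mathbb{A}'$ (size $\leq|\ybar|m$) and $\mathbb{D}$ (size $\leq 2^{m^{\ell+1}}m|\xbar|$) there are still at least $N$ elements of $M$ left over — or if not, one passes to a slightly larger height and re-selects, absorbing the extra $N$ points into the ambient structure using that the counting hypotheses are stated for the height-$K$ box with $K$ arbitrarily large relative to everything else. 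The cleanest route is probably to prove \ref{d'} by applying \ref{b'} and \ref{c'} to a height-$(m+N)$ sub-box $\mathbb{A}''$ of $\mathbb{A}$ with $\mathbb{A}'\subseteq\mathbb{A}''$, realize everything over $\mathbb{A}''$ in one $\calM$, and use the $N$ extra coordinates of $\mathbb{A}''$ (which lie in $M$ but not in $A'$ and can be kept out of $\mathbb{D}$) as the required spare elements. I expect part \ref{d'}, specifically packaging all the required realizations into a single member of $\calH$ with room to spare, to be the main obstacle, and the key device is repeatedly exploiting $K\gg N\geq m$ so that everything we need fits inside a sub-box far smaller than the full box.
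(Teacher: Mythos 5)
Your plan for parts (a)--(c) is correct and tracks the paper's proof closely. In particular, for (b) and (c) the paper introduces families $\Gamma_j\subseteq S_\varphi(\mathbb{A})$ consisting of the types over $\mathbb{A}$ whose restriction to $\mathbb{A}'$ equals $p_j$; these are exactly the ``fibers of the restriction map'' that you describe, and the count $|\Gamma_j|=2^{K^\ell-m^\ell}\geq N$ is what the paper establishes. One notational caution: for (a) when $\ell=0$ the sub-box is a singleton in $X^{|\ybar|}$ (not a singleton element of $X$), so the underlying set has size at most $|\ybar|$, not $1$; the paper handles this carefully.

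Part (d) is where your plan has a real gap, and it is the part you yourself flag as the obstacle. The task is to produce a \emph{single} $\calM\in\calH$ in which (i) all $2^{m^{\ell+1}}$ elements of $S_{\varphi,m}(\mathbb{A}',\rho{\upharpoonright}_{\mathbb{A}'})$ are realized by distinct $m$-tuples in $M^{m|\xbar|}$, and (ii) at least $N$ elements of $M$ lie outside $A'\cup D$. Observation \ref{keyob} gives you one $\calM$ per choice of a $K$-tuple $(\alpha_1,\ldots,\alpha_K)\in S_\varphi(\mathbb{A})^K$, so the whole argument turns on choosing that $K$-tuple well. The paper chooses it so that the first $m2^{m^\ell}$ entries cover each $\Gamma_j$ exactly $m$ times; then the corresponding realizations $\abar_1,\ldots,\abar_{m2^{m^\ell}}$ supply $m$ realizations of each $p_j$, from which every $m$-fold pattern can be extracted, and the remaining $K-m2^{m^\ell}\geq K/2$ realizations $\abar_{m2^{m^\ell}+1},\ldots,\abar_K$ are used to find spare elements via the inequality $|E\setminus(A'\cup D)|\geq(K/2)^{1/|\xbar|}-|\xbar|m2^{m^\ell}-|\ybar|m\geq N$ from Lemma \ref{indlem}(c). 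Your ``cleanest route'' --- passing to a height-$(m+N)$ sub-box $\mathbb{A}''$ and using its extra coordinates as spare elements --- does not supply this. The extra coordinates of $\mathbb{A}''$ are tuples in $X^{k_i}$, and while their underlying set lies in $M$, nothing guarantees it is disjoint from $D$: the realizations in $\mathbb{D}$ may freely reuse elements of $A''$. A disjointness claim would have to be argued, and in fact the paper never argues disjointness --- it only does the size count above, which your sketch does not carry out. Your first, discarded route (``throw in $\mathbb{A}'$ and $\mathbb{D}$ and count what is left'') is closer to the paper's argument, but you apply it to the underlying set of $\mathbb{A}$ (size $\leq|\ybar|K$) rather than to the underlying set of the leftover realizations $\mathbb{E}$; the former gives no lower bound on what survives, whereas the latter does because $|\mathbb{E}|\geq K/2$ forces $|E|\geq(K/2)^{1/|\xbar|}$. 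So the plan needs to be repaired by (i) explicitly constructing the $K$-tuple $\overline\alpha$ with the prescribed multiplicities in the $\Gamma_j$'s, and (ii) counting spare elements from the tail of its realization, not from the box.
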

\begin{proof}
Let $A$ be the underlying set of $\mathbb{A}$ and let $A'$ be the underlying set of $\mathbb{A}'$.  We first show (\ref{a'}). If $\ell=0$, then $\mathbb{A}=\mathbb{A}'$ implies either $|\ybar|=0$ and $|A'|=0\leq |\ybar|m$, or $|\ybar|>0$ and $|A'|=1\leq |\ybar|m$.  If $\ell>0$, then $\mathbb{A}=\prod_{i=1}^{\ell}A_i$ where for each $i$, $A_i\subseteq A^{k_i}$ for some $k_i\geq 1$ and such that $\sum_{i=1}^{\ell}k_i=|\ybar|$.  Because $\mathbb{A}'$ is a sub-box of $\mathbb{A}$ of height $m$, we have $\mathbb{A}'=\prod_{i=1}^{\ell}A_i'$, where for each $i$, $A'_i\subseteq A_i$ has size $m$.  For each $i$, Lemma \ref{indlem} part (c) implies $|A'_i|\leq k_im$.  Consequently, $|A'|\leq \sum_{i=1}^{\ell}k_im=|\ybar|m$.  Thus (\ref{a'}) holds.  For parts (\ref{b'}), (\ref{c'}), and (\ref{d'}), we will use the following claim.
 \begin{claim}\label{claim}
There are $\Gamma_1,\ldots, \Gamma_{2^{m^{\ell}}}\subseteq S_{\varphi}(\mathbb{A})$ and pairwise distinct $p_1,\ldots, p_{2^{m^{\ell}}}$ in $S_{\varphi}(\mathbb{A}')$ such that:
\begin{enumerate}[(i)]
\item For each $1\leq i\leq 2^{m^{\ell}}$, every element of $\Gamma_i$ is an extension of $p_i$ to $\mathbb{A}$.
\item If $\ell>0$, then $|\Gamma_i|\geq N$.
\end{enumerate}
\end{claim}
\noindent{\it Proof.} Suppose first $\ell=0$. Then $\mathbb{A}'=\mathbb{A}$.  By assumption $|S_{\varphi}(\mathbb{A})|=2^{K^{\ell}}=2$. Let $p_1,p_2$ be the two distinct elements of $S_{\varphi}(\mathbb{A})$, and set $\Gamma_1=\{p_1\}$, and $\Gamma_2=\{p_2\}$.   Then it is clear $p_1\neq p_2\in S_{\varphi}(\mathbb{A}')$ and (i), (ii) hold.  Suppose now $\ell\geq 1$.  Let $X_1,\ldots, X_{2^{m^{\ell}}}$ enumerate all the subsets of $\mathbb{A}'$, and for each $1\leq j\leq 2^{m^{\ell}}$, set $p_j(\xbar)=  \{\varphi(\xbar;\abar): \abar\in X_j\}\cup \{\neg \varphi(\xbar;\abar):\abar\in \mathbb{A}'\setminus X\}$.  Given $X\subseteq \mathbb{A}\setminus \mathbb{A}'$, and $1\leq j\leq 2^{m^{\ell}}$, set 
$$
p_{j,X}(\xbar)=\{\varphi(\xbar; \abar): \abar \in X_j\cup X\}\cup \{\neg \varphi(\xbar; \abar): \abar \in\mathbb{A}\setminus (X_j\cup X)\}\quad \hbox{ and }\quad \Gamma_j=\{p_{j,X}(\xbar): X\subseteq \mathbb{A}\setminus \mathbb{A}'\}.
$$
Since $|S_{\varphi}(\mathbb{A})|=2^{K^{\ell}}$, we must have that for all $X\subseteq \mathbb{A}$, $\{\varphi(\xbar;\abar): \abar\in X\}\cup \{\neg \varphi(\xbar;\abar):\abar\in \mathbb{A}\setminus X\}$ is in $S_{\varphi}(\mathbb{A})$.  Consequently, for each $1\leq j\leq 2^{m^{\ell}}$, $\Gamma_j\subseteq S_{\varphi}(\mathbb{A})$.  By definition, for all $p\in \Gamma_j$, $p{\upharpoonright}_{\mathbb{A}'}=p_j$.  For each $j$, since $\Gamma_j\subseteq S_{\varphi}(\mathbb{A})$, and since any realization of an element of $\Gamma_j$ is a realization of $p_j$, we have $p_j\in S_{\varphi}(\mathbb{A}')$.  By definition, $p_1,\ldots, p_{2^{m^{\ell}}}$ are pairwise distinct.  Thus we have shown $p_1,\ldots, p_{2^{m^{\ell}}}$ are pairwise distinct elements of $S_{\varphi}(\mathbb{A}')$ and (i) holds.  For each $j$, $|\Gamma_j|\geq |\calP(\mathbb{A}\setminus \mathbb{A}')|\geq 2^{K^{\ell}}-2^{m^{\ell}}\geq N$, where the last inequality is because $K>>N\geq m$.  Thus (ii) holds.  This finishes the proof of Claim \ref{claim}.{\leavevmode\unskip\penalty9999 \hbox{}\nobreak\hfill\quad\hbox{$\blacksquare$}}

Now fix $\Gamma_1,\ldots, \Gamma_{2^{m^{\ell}}}, p_1,\ldots, p_{2^{m^{\ell}}}$ as in Claim \ref{claim}.  Since the $p_i$ are pairwise distinct elements of $S_{\varphi}(\mathbb{A}')$, we immediately have that $|S_{\varphi}(\mathbb{A}')|=2^{m^{\ell}}$, so (\ref{b'}) holds.  We now show (\ref{c'}) holds.  Suppose $\ell>0$, $\calM$ is an $\calL$-structure, and $\mathbb{D}\subseteq M^{|\xbar|}$ contains one realization of every element of $S_{\varphi}(\mathbb{A})$.  Then $\calM$ contains a realization of every element in $\bigcup_{i=1}^{2^{m^{\ell}}}\Gamma_i$.  Since each $\Gamma_i$ contains at least $N$ extensions of $p_i$, this shows $\calM$ contains at least $N$ realizations of each $p_i$.  This finishes the proof of (\ref{c'}).

We now prove (\ref{d'}).  Suppose $|S_{\varphi,K}(\mathbb{A},\rho)|=2^{K^{\ell+1}}$ for some $\rho \in S^{\emptyset}_{2|\xbar|}(\mathbb{A})$.  Since $K>>m$, we may assume $K\geq m2^{m^{\ell}+1}$.  Thus we may fix a sequence $(\alpha_{1},\ldots, \alpha_{K})\in S_{\varphi}(\mathbb{A})^K$ such that for each $1\leq j\leq 2^{m^{\ell}}$, 
\begin{align}\label{label}
|\{\alpha_1,\ldots, \alpha_{m2^{m^{\ell}}}\}\cap \Gamma_j|=m.
\end{align}
Then $|S_{\varphi,K}(\mathbb{A},\rho)|= 2^{K^{\ell+1}}$ implies by Observation \ref{keyob} that $\overline{\alpha}:=\alpha_{1}(\xbar_1)\cup \ldots \cup \alpha_{K}(\xbar_K)\in S_{\varphi,K}(\mathbb{A},\rho)$.  Thus there is $\calM\in \calH$ containing pairwise distinct $\abar_{1},\ldots, \abar_{K}$ realizing $\overline{\alpha}$ such that for each $i\neq j$, $\calM\models \rho(\abar_i,\abar_j)$.  For each $1\leq j\leq 2^{m^{\ell}}$, since every element of $\Gamma_j$ extends $p_j$, (\ref{label}) implies that $\{\abar_1,\ldots, \abar_{m2^{m^{\ell}}}\}$ contains $m$ realizations of $p_j$.  This means that for all $(p_{j_1},\ldots, p_{j_m})$ in $S_{\varphi}(\mathbb{A}')^m$, we may choose pairwise distinct tuples $\cbar_{1},\ldots, \cbar_{m}\in \{\abar_1,\ldots, \abar_{m2^{m^{\ell}}}\}$ with the property that $\calM\models p_{j_1}(\cbar_{1})\cup \ldots\cup p_{j_m}(\cbar_{m})$.  Let $\mathbb{D}$ consist of one such realization for each $(p_{j_1},\ldots, p_{j_m})\in S_{\varphi}(\mathbb{A}')^m$.  Note that $(\cbar_1,\ldots, \cbar_m)\in \mathbb{D}$ implies $\calM\models \rho {\upharpoonright}_{\mathbb{A}'}(\cbar_i,\cbar_j)$ for each $i\neq j$ (since $\mathbb{D}\subseteq\{\abar_1,\ldots, \abar_{m2^{m^{\ell}}}\}^m$).

We have now shown that for every $(p_{j_1},\ldots, p_{j_m})\in S_{\varphi}(\mathbb{A}')^m$, $p_{j_1}(\xbar_1)\cup \ldots \cup p_{j_m}(\xbar_m)$ is in $S_{\varphi,m}(\mathbb{A'},\rho{\upharpoonright}_{\mathbb{A}'})$.  To finish the proof of (\ref{d'}), we just have to show that $M$ contains at least $N$ elements not appearing in $A'$ or $\mathbb{D}$.  Let $D$ be the underlying set of $\mathbb{D}$ and let $E$ be the underlying set of the tuples $\mathbb{E}=\{\abar_{m2^{m^{\ell}}+1}, \ldots, \abar_K\}$.  Since $K\geq m2^{m^{\ell}+1}$, $|\mathbb{E}|\geq K/2$.  This along with Lemma \ref{indlem} part (c) and the fact that $\mathbb{E}\subseteq M^{|\xbar|}$ implies $(K/2)^{1/|\xbar|}\leq |\mathbb{E}|^{1/|\xbar|}\leq |E|$.  Since $\mathbb{D}\subseteq M^{|\xbar|m}$ and $|\mathbb{D}|=m2^{m^{\ell}}$, Lemma \ref{indlem} part (c) implies $|D|\leq |\xbar|m2^{m^{\ell}}$. We have already shown $|A'|\leq |\ybar|m$.  Combining these bounds, we obtain  that  $|E\setminus (A'\cup D)|\geq (K/2)^{1/|\xbar|}- |\xbar|m2^{m^{\ell}}-|\ybar|m\geq N$, where the last inequality is because $K>>N\geq  m$. This finishes the proof of (\ref{d'}).
\end{proof}

\section{Proofs of Main Theorems}\label{secth5}

In this section, we prove the main results of this paper.  We begin with Theorem \ref{th5}, which we restate here for convenience.  If $\calM$ is an $\calL$-structure and $A\subseteq M$, then $\calM[A]$ denotes the $\calL$-structure induced on $A$ by $\calM$.

 \begin{theorem4}
 If $1\leq \ell$ and $\VC^*_{\ell-1}(\calH)=\infty$, then there is $C>0$ such that for sufficiently large $n$, $|\calH_n|\geq 2^{Cn^{\ell}}$.
\end{theorem4}
\begin{proof}
Assume $1\leq \ell$ and $\VC^*_{\ell-1}(\calH)=\infty$.  By definition, there is a formula $\varphi(\xbar; \ybar)$ such that $VC^*_{\ell-1}(\varphi,\calH)=\infty$.  Let $s=|\xbar|$ and $t=|\ybar|+|\xbar|$.  Fix $n$ large and $K>>n$.  Then $\VC_{\ell-1}^*(\varphi,\calH)\geq K$ implies there is an $(\ell-1,|\ybar|)$-box $\mathbb{A}$ of height $K$ and $\rho\in S_{2s}^{\emptyset}(\mathbb{A})$ such that $|S_{\varphi,K}(\mathbb{A},\rho)|=2^{K^{\ell}}$.  Fix $m=\lfloor n/3t\rfloor$.  Note $m\leq n<<K$.

Choose a sub-box $\mathbb{A}'\subseteq \mathbb{A}$ of height $m$ and let $A'$ be the underlying set of $\mathbb{A}'$.  By Lemma \ref{shlem} part (\ref{a'}), $|A'|\leq |\ybar| m\leq tk$.  By Lemma \ref{shlem} part (\ref{d'}), $|S_{\varphi,m}(\mathbb{A}',\rho{\upharpoonright}_{\mathbb{A}'})|=2^{m^{\ell}}$, and there is $\calM\in \calH$, and $\mathbb{D}\subseteq M^{m|\xbar|}$ such that $\mathbb{D}$ contains one realization of every element of $S_{\varphi,m}(\mathbb{A}',\rho{\upharpoonright}_{\mathbb{A}'})$, and $M$ contains $n$ elements not appearing in $A'$ or in $\mathbb{D}$. Let $D$ be the underlying set of $\mathbb{D}$ and let $E\subseteq M$ be a set of $n$ elements in $M\setminus (A'\cup D)$.

Given $\Cbar\in \mathbb{D}$, let $C$ be the underlying set of $\Cbar$.  For all $\Cbar\in \mathbb{D}$, $\Cbar \in M^{m|\xbar|}$ implies by Lemma \ref{indlem} part (c) that $|C|\leq |\xbar|m\leq tm$.  Since every element of $\mathbb{D}$ realizes the same equality type over $\mathbb{A}'$, we have that for all $\Cbar, \bar{C'}\in \mathbb{D}$, $|C|=|C'|$ and $|C\cup A'|=|C\cup A'|$.  Given $\Cbar\in \mathbb{D}$, note 
\begin{align}\label{ppp}
|C\cup A'|\leq |C|+|A'|\leq tm+tm=2tm\leq 2t(n/3t)=2n/3.
\end{align}
Since $E$ has size $n$ and is disjoint from $D\cup A'$, (\ref{ppp}) implies we may choose $E'\subseteq E$ such that for all $\Cbar\in \mathbb{D}$, $|C\cup A'\cup E'|=n$.  Now for each $\Cbar\in \mathbb{D}$, set $\calM_{\Cbar}=\calM[C\cup A'\cup E']$.  Because $\calH$ is a hereditary $\calL$-property, $\calM_{\Cbar}\in \calH$ for all $\Cbar\in \mathbb{D}$.  Fix some $\Cbar_*=(\cbar^*_1,\ldots, \cbar^*_m)\in \mathbb{D}$.  Given $\Cbar=(\cbar_1,\ldots, \cbar_m)\in \mathbb{D}$, note that $\Cbar_*$ and $\Cbar$ have the same equality type over $A'\cup E'$.  Therefore there is a bijection $f_{\Cbar}:C\cup A'\cup E'\rightarrow C_*\cup A'\cup E'$, which fixes $A'\cup E'$ and which sends $\cbar_i$ to $\cbar^*_i$ for each $1\leq i\leq m$.  Let $\calM_{\Cbar}^*$ be the $\calL$-structure with universe $C_*\cup A'\cup E'$, and which is isomorphic to $\calM_{\Cbar}$ via the bijection $f_{\Cbar}$.  Since $\calH$ is closed under isomorphism, $\calM_{\Cbar}^*\in \calH$ for all $\Cbar\in \mathbb{D}$.  Clearly $\Cbar\neq \Cbar'$ implies $\calM_{\Cbar}^*\neq \calM_{\Cbar'}^*$ (since then $\Cbar$ and $\Cbar'$ realize distinct elements of $S_{\varphi,m}(\mathbb{A},\rho{\upharpoonright}_{\mathbb{A}'})$).  Thus $\{\calM_{\Cbar}^*: \Cbar \in \mathbb{D}\}$ consists of $|\mathbb{D}|$ distinct elements of $\calH$, all with universe $C_*\cup A'\cup E'$.  Since $|C_*\cup A'\cup E'|=n$ and $\calH$ is closed under isomorphism, this shows $|\calH_n|\geq |\mathbb{D}|= 2^{m^{\ell}}$.  Since $m=\lfloor n/3t\rfloor$ and $n$ is large, we have $|\calH_n|\geq 2^{m^{\ell}}\geq 2^{Cn^{\ell}}$ for $C=(1/4t)^{\ell}$. 
\end{proof}

We will use the following result from \cite{CTHParxiv} in our proof of Theorem \ref{th2}.

\begin{theorem}\label{enumeration}
Suppose $\mathcal{H}$ is a hereditary $\calL$-property.  Then the following limit exists.
$$
\pi(\mathcal{H})=\lim_{n\rightarrow \infty}|\calH_n|^{1/{n\choose r}}.
$$
Moreover, if $\pi(\mathcal{H})>1$, then $|\mathcal{H}_n|= \pi(\mathcal{H})^{{n\choose r}+o(n^r)}$, and if $\pi(\mathcal{H})\leq 1$, then $|\mathcal{H}_n|=2^{o(n^r)}$.  
\end{theorem}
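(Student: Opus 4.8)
The plan is to reduce Theorem~\ref{enumeration} to the single claim that $a_n := \binom{n}{r}^{-1}\log_2|\calH_n|$ is non-increasing in $n\ge r$. We may assume $\calH_n\neq\emptyset$ for every $n$: if $\calH_n=\emptyset$ then, since $\calH$ is hereditary, $\calH_m=\emptyset$ for all $m\ge n$, and the statement is trivial. Under this assumption $a_n\ge 0$, so once monotonicity is established $a_n$ converges to some $\alpha\ge 0$; setting $\pi(\calH):=2^{\alpha}\ge 1$ gives the limit. The ``moreover'' clause is then bookkeeping: writing $a_n=\alpha+\epsilon_n$ with $\epsilon_n\ge 0$ and $\epsilon_n\to 0$, we get $\log_2|\calH_n|=\alpha\binom{n}{r}+\epsilon_n\binom{n}{r}=\alpha\binom{n}{r}+o(n^r)$; if $\pi(\calH)>1$ then $\alpha>0$ and this says $|\calH_n|=\pi(\calH)^{\binom{n}{r}+o(n^r)}$, while if $\pi(\calH)\le 1$ then $\pi(\calH)=1$, $\alpha=0$, and $\log_2|\calH_n|=o(n^r)$.

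Thus the content is the monotonicity, which I would obtain from the inequality
\[
\binom{N}{r}\log_2|\calH_n|\;\le\;\binom{n}{r}\log_2|\calH_N|\qquad(r\le N\le n),
\]
proved by an entropy argument. Encode structures as follows: let $\Lambda$ be the (finite) set of $\calL$-structures with universe $[r]$, and for $\calM\in\calH_n$ let $\lambda_\calM\colon\binom{[n]}{r}\to\Lambda$ send an $r$-set $e$ to the copy on $[r]$ of the induced substructure $\calM[e]$ (via the order isomorphism $e\cong[r]$). Since every symbol of $\calL$ is relational of arity $\le r$, the truth of any atomic formula on a tuple with at most $r$ distinct entries is recorded by $\lambda_\calM(e)$ for a suitable $e$, so $\calM$ is determined by $\lambda_\calM$; hence $\calM\mapsto\lambda_\calM$ is injective and $|\{\lambda_\calM:\calM\in\calH_n\}|=|\calH_n|$. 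Moreover, for $T\in\binom{[n]}{N}$, heredity gives $\calM[T]\in\calH$, and the restriction of $\lambda_\calM$ to $\binom{T}{r}$ is exactly the encoding of $\calM[T]$ read along the order isomorphism $T\cong[N]$; so the projection of $\{\lambda_\calM:\calM\in\calH_n\}$ onto the coordinates indexed by $\binom{T}{r}$ has size at most $|\calH_N|$.

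Now let $X$ be uniform on $\{\lambda_\calM:\calM\in\calH_n\}$, so $H(X)=\log_2|\calH_n|$, and for $T\in\binom{[n]}{N}$ let $X_T$ be the restriction of $X$ to the coordinates in $\binom{T}{r}$, so $H(X_T)\le\log_2|\calH_N|$ by the previous paragraph. Each $e\in\binom{[n]}{r}$ lies in $\binom{T}{r}$ for exactly $\binom{n-r}{N-r}$ of the sets $T\in\binom{[n]}{N}$, so Shearer's entropy inequality yields
\[
\log_2|\calH_n|=H(X)\le\frac{1}{\binom{n-r}{N-r}}\sum_{T\in\binom{[n]}{N}}H(X_T)\le\frac{\binom{n}{N}}{\binom{n-r}{N-r}}\log_2|\calH_N|=\frac{\binom{n}{r}}{\binom{N}{r}}\log_2|\calH_N|,
\]
using the identity $\binom{n}{N}\binom{N}{r}=\binom{n}{r}\binom{n-r}{N-r}$. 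Taking $N=n-1$ gives $a_n\le a_{n-1}$, which finishes the argument.

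The part needing care, rather than a genuine obstacle, is the combinatorial encoding: one must check that $\calM\mapsto\lambda_\calM$ really is a bijection onto its image (injectivity uses the arity bound and the consistency of the local data over overlapping $r$-sets) and that passing to the induced substructure on $T$ commutes with the encoding up to the identification $T\cong[N]$, which is what legitimizes $H(X_T)\le\log_2|\calH_N|$ and is the only place heredity of $\calH$ enters. One can avoid Shearer's lemma and prove the displayed inequality by a direct covering/double-counting estimate in the spirit of Kruskal--Katona, but the entropy formulation keeps the covering bookkeeping transparent. Everything else --- convergence of a bounded monotone sequence and translating the limit into the two growth regimes --- is routine.
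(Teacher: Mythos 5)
Theorem~\ref{enumeration} is not proved in this paper: it is imported verbatim from \cite{CTHParxiv}, so there is no in-text argument to compare your proposal against. Taken on its own, your proof is correct. The encoding $\calM\mapsto\lambda_\calM$ is injective for $n\ge r$ precisely because every relation has arity at most $r$, so every atomic fact about a tuple from $[n]$ is witnessed inside some $r$-subset; the restriction of $\lambda_\calM$ to $\binom{T}{r}$ for $T\in\binom{[n]}{N}$ is the encoding of $\calM[T]$ (after the order isomorphism $T\cong[N]$), which lies in $\calH_N$ by heredity and isomorphism-closure; each $e\in\binom{[n]}{r}$ is covered exactly $\binom{n-r}{N-r}$ times by the family $\{\binom{T}{r}\}_{T}$; and the identity $\binom{n}{N}\binom{N}{r}=\binom{n}{r}\binom{n-r}{N-r}$ turns Shearer's inequality into exactly the monotonicity $a_n\le a_N$. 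The reduction of the ``moreover'' clause to this monotonicity is also right, and it is worth emphasizing (as you do implicitly) that it uses $\calH_n\neq\emptyset$ to get $a_n\ge 0$, hence $\pi(\calH)\ge 1$, so the case $\pi(\calH)\le 1$ forces $\alpha=0$.

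Two small remarks. First, your handling of the degenerate case is a hair loose: if $\calH_n=\emptyset$ eventually, then $|\calH_n|=0$ and the conclusion $|\calH_n|=2^{o(n^r)}$ is literally false (a power of $2$ is never $0$); the theorem is implicitly assuming $\calH_n\neq\emptyset$ for all $n$, which is the interesting case anyway, so this is a statement-level quibble rather than a flaw in your argument. Second, the cited paper \cite{CTHParxiv} proves considerably more than what is quoted here (in particular a structural description of $\pi(\calH)$ via an extremal quantity, using hypergraph regularity), so your self-contained entropy proof of just the limit-plus-dichotomy is genuinely lighter machinery; it is the natural generalization of the Alekseev/Bollob\'as--Thomason supermultiplicativity argument from hypergraphs to arbitrary finite relational languages, and it would be a reasonable thing to include if one wanted the present paper to be self-contained on this point.
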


We now fix some notation.  A formula $\varphi(\xbar;\ybar)$ is \emph{trivially partitioned} if $|\ybar|=0$.  Given a set $X$ and $n\geq 1$, $X^{\underline{n}}=\{(x_1,\ldots, x_n)\in X^n:i\neq j$ implies $x_i\neq x_j\}$.  If $\calM\in \calH$, $\varphi(\xbar;\ybar)$ is a formula, $\mathbb{A}\subseteq M^{|\ybar|}$, and $\abar_1,\ldots, \abar_k\in M^{|\xbar|}$ are pairwise distinct, then define $qftp^{\calM}_{\varphi}(\abar_1,\ldots, \abar_k; \mathbb{A})$ to be the element $p_1(\xbar_1)\cup \ldots \cup p_k(\xbar_k)$ of $S_{\varphi,k}(\mathbb{A})$ such that $\calM\models p_1(\abar_1)\cup \ldots \cup p_k(\abar_k)$. 

The following notation is from \cite{AroskarCummings}.  Let $Index$ be the set of pairs $(R,p)$ where $R(x_1,\ldots, x_t)$ is a relation of $\calL$ and $p$ is a partition of $[t]$.  Given $(R,p)\in Index$, define $R_p(\zbar)$ be the formula obtained as follows.  Suppose $p_1,\ldots, p_s$ are the parts of $p$, and for each $i$, $m_i=\min p_i$.  For each $x_j\in \{x_1,\ldots, x_t\}$, find which part of $p$ contains $j$, say $p_i$, then replace $x_j$ with $x_{m_i}$.  Relabel the variables $(x_{m_1},\ldots, x_{m_s})=(z_1,\ldots, z_s)$ and let $R_p(\zbar)$ be the resulting formula.  Now let $rel(\calL)$ consist of all formulas $\varphi(\ubar;\vbar)$ obtained by permuting and/or partitioning the variables of a formula of the form $R_p(\zbar)\wedge \bigwedge_{1\leq i\neq j\leq {|\zbar|}}z_i\neq z_j$, where $(R,p)\in Index$.  

Given a formula $\varphi(\ubar;\vbar)$ and an $\calL$-structure $\calM$, let $\varphi(\calM)=\{\abar\bbar\in M^{|\ubar|+|\vbar|}: \calM\models \varphi(\abar;\bbar)\}$.  Observe that if $\varphi(\ubar;\vbar)\in rel(\calL)$ and $\calM$ is an $\calL$-structure, then $\varphi(\calM)\subseteq M^{\underline{|\ubar|+|\vbar|}}$, and $|\ubar|+|\vbar|\leq r$.   We will use the fact that any $\calL$-structure $\calM$ is completely determined by knowing $\varphi(\calM)$ for all trivially partitioned $\varphi\in rel(\calL)$, or by knowing $\varphi(\calM)$ for all $\varphi(\ubar;\vbar)\in rel(\calL)$ with $|\ubar|=1$.  Given a formula $\varphi(\ubar;\vbar)$ and $n\geq 1$, set 
$$
\calF_{\varphi}(n):=\{U\subseteq [n]^{|\ubar|+|\vbar|}: \text{ there is }\calM\in \calH_n\text{ with }\varphi(\calM)=U\}.
$$ 
We now prove Theorem \ref{zerodim} and then Theorem \ref{th2}, which we restate here for convenience. Recall $\calH$ is a fixed hereditary $\calL$-property and the maximum arity of $\calL$ is $r$.

\begin{theorem3}
One of the following holds.
\begin{enumerate}[(a)]
\item $VC^*_{0}(\calH)<\infty$ and there $K>0$ such that for sufficiently large $n$, $|\calH_n|\leq n^K$ or 
\item $VC^*_{0}(\calH)=\infty$, and there is a constant $C>0$ such that for sufficiently large $n$, $|\calH_n|\geq 2^{Cn}$.
\end{enumerate}
\end{theorem3}
\begin{proof}
If $\VC_{0}^*(\calH)=\infty$, then Theorem \ref{th5} implies there is a constant $C>0$ such that for large $n$, $|\calH_n|\geq 2^{Cn}$, so (b) holds. Suppose now $VC^*_{0}(\calH)=d<\infty$. Fix $\varphi(\xbar)$ a trivially partitioned formula from $rel(\calL)$.  Set $k=(d+1)^r2^{r\choose 2}$ and fix $n>>k,d,|\xbar|$.  Observe $\calF_{\varphi}(n)\subseteq [n]^{\underline{|\xbar|}}$ because $\varphi\in rel(\calL)$.  We show $\VC(\calF_{\varphi}(n))< k$.  Suppose towards a contradiction $\VC(\calF_{\varphi}(n))\geq k$.  Then there is $U\subseteq [n]^{\underline{|\xbar|}}$ of size $k$ shattered by $\calF_{\varphi}(n)$.  In other words, for all $Y\subseteq U$, there is $\calM_Y\in \calH_n$ with $\varphi(\calM_Y)=Y$.  Lemma \ref{indlem} part (b) implies there is $U^*\subseteq U$ which is an indiscernible set with respect to equality, and which has size at least $(k/2^{|\xbar|\choose 2})^{1/|\xbar|}\geq (k/2^{r\choose 2})^{1/r}=d+1$. Let $V=\{\vbar_1,\ldots, \vbar_{d+1}\}$ consist of $d+1$ distinct elements of $U^*$.  Let $\rho\in S_{2|\xbar|}^{\emptyset}(\emptyset)$ be such that for all $i\neq j$, $\rho(\vbar_i,\vbar_j)$ holds (this exists because $V\subseteq U^*$ and $U^*$ is an indiscernible set with respect to equality).  Note that for any $Y,Y'\subseteq V$, $Y\neq Y'$ implies $qftp^{\calM_Y}_{\varphi}(\vbar_1,\ldots, \vbar_{d+1})\neq qftp^{\calM_{Y'}}_{\varphi}(\vbar_1,\ldots, \vbar_{d+1})$. This shows $|S_{\varphi,d+1}(\emptyset,\rho)|=2^{|V|}=2^{d+1}$, contradicting that $\VC_0^*(\calH)=d$.  Thus $|\VC(\calF_{\varphi}(n))|\leq k$, and consequently, $|\calF_{\varphi}(n)|\leq Cn^k$, where $C=C(k)>0$ is from Theorem \ref{ssl}.  Every $\calM\in \calH_n$ can be built by choosing, for each trivially partitioned $\varphi(\xbar)\in rel(\calL)$, an element of $\calF_{\varphi}(n)$ to be $\varphi(\calM)$. Hence
$$
|\calH_n|\leq  \prod_{\varphi\in rel(\calL)}|\calF_{\varphi}(n)|\leq (Cn^k)^{|rel(\calL)|}=C^{|rel(\calL)|}n^{|rel(\calL)|k} \leq n^{2|rel(\calL)|k},
$$
where the last inequality is because $n$ is large and $|rel(\calL)|, C$ are constants. Thus (a) holds where $K=2|rel(\calL)|k$.
\end{proof}

\begin{theorem2}
One of the following holds.
\begin{enumerate}[(a)]
\item $VC^*_{r-1}(\calH)<\infty$ and there is an $\epsilon>0$ such that for sufficiently large $n$, $|\calH_n|\leq 2^{n^{r-\epsilon}}$ or
\item $VC^*_{r-1}(\calH)=\infty$, and there is a constant $C>0$ such that $|\calH_n|= 2^{Cn^{r}+o(n^r)}$.
\end{enumerate}
When $r=1$, (a) can be replaced by the following stronger statement: 
\begin{enumerate}[(a')]
\item $VC^*_{0}(\calH)<\infty$ and there is a constant $K>0$ such that for sufficiently large $n$, $|\calH_n|\leq n^K$.
\end{enumerate}
\end{theorem2}
\begin{proof}
If $\VC_{r-1}^*(\calH)=\infty$, then Theorem \ref{th5} implies there is a constant $C$ such that for large $n$, $|\calH_n|\geq 2^{Cn^r}$.  By Theorem \ref{enumeration}, $\pi(\calH)>1$ and $|\calH_n|=\pi(\calH)^{{n\choose r}+o(n^r)}$.  Clearly this implies there is $C'>0$ such that $|\calH_n|=2^{C'n^r+o(n^r)}$, so we have shown (b) holds.

Assume now $\VC_{r-1}^*(\calH)=d<\infty$.  If $r=1$, then (a') holds by Theorem \ref{zerodim}. So assume $r\geq 2$.  Fix $\varphi(\xbar;\ybar)\in rel(\calL)$ with $|\xbar|=1$ and $n>>d$.  Observe $\calF_{\varphi}(n)\subseteq [n]^{\underline{1+|\ybar|}}$ because $\varphi\in rel(\calL)$.  We show $\VC_{r}(\calF_{\varphi}(n))\leq d$.  If $1+|\ybar|<r$, this is obvious from the definition, so assume $1+|\ybar|=r$.  Suppose towards a contradiction $\VC_r(\calF_{\varphi}(n))>d$.  Then there is an $(r,r)$-box $\mathbb{A}\subseteq [n]^{\underline{r}}$ of height $d+1$ such that $\calF_{\varphi}(n)$ shatters $\mathbb{A}$.  In other words, if $U_1,\ldots, U_{2^{(d+1)^r}}$ enumerate the subsets of $\mathbb{A}$, then for each $1\leq j\leq 2^{(d+1)^r}$, there is $\calM_j\in \calH_n$ with $\varphi(\calM_j)=U_j$.  By definition, $\mathbb{A}=\prod_{i=1}^rA_i$, for some $A_1,\ldots, A_r\subseteq [n]$.  Enumerate $A_1=\{a_1,\ldots, a_{d+1}\}$, and set $\mathbb{A}'=\prod_{i=2}^rA_i$.  Let $\rho\in S_{d+1}^{\emptyset}(\mathbb{A}')$ be such that $\rho(a_1,\ldots, a_{d+1})$ holds.  Since $\mathbb{A}\subseteq [n]^{\underline{r}}$, $\rho(x_1,\ldots, x_{d+1})$ says all the $x_i$ are pairwise distinct and are distinct from all the elements in $\mathbb{A}'$.  Note that for each $1\leq i\neq j\leq 2^{(d+1)^r}$, $qftp_{\varphi}^{\calM_i}(a_1,\ldots, a_{d+1}; \mathbb{A}')\neq qftp_{\varphi}^{\calM_j}(a_1,\ldots, a_{d+1}; \mathbb{A}')$ are distinct elements of $S_{\varphi,d+1}(\mathbb{A}',\rho)$.  This implies $|S_{\varphi,d+1}(\mathbb{A}',\rho)|=2^{(d+1)^r}$.  But now $\VC_{r-1}^*(\varphi,\calH)\geq d+1$, contradicting our assumption that $\VC^*_{r-1}(\calH)=d$.  Thus $\VC_r(\calF_{\varphi}(n))\leq d$.  Consequently, $|\calF_{\varphi}(n)|\leq C2^{n^{r-\epsilon}}$, where $C=C(d)$ and $\epsilon=\epsilon(d)>0$ are from Theorem \ref{CPT'}.  Every $\calM\in \calH_n$ can be built by choosing, for each  $\varphi(\xbar;\ybar)\in rel(\calL)$ with $|\xbar|=1$, an element of $\calF_{\varphi}(n)$ to be $\varphi(\calM)$. Thus 
$$
|\calH_n|\leq  \prod_{\varphi\in rel(\calL)}|\calF_{\varphi}(n)|\leq(C2^{n^{r-\epsilon}})^{|rel(\calL)|}=C^{|rel(\calL)|}2^{|rel(\calL)|n^{r-\epsilon}} \leq 2^{n^{r-\epsilon/2}},
$$
where the last inequality is because $n$ is large and $|rel(\calL)|, C$ are constants. Thus (a) holds.

\end{proof}

We end this section with Example \ref{ex1}, which shows that $\VC_{\ell}(\calH)<\infty$ does not necessarily imply $|\calH_n|\leq 2^{o(n^{\ell+1})}$, when $0<\ell<r-1$. In particular, we give an example of a hereditary $\calL$-property $\calH$ where the largest arity of $\calL$ is $3$, where $\VC_1(\calH)<\infty$, but where $|\calH_n|\geq 2^{Cn^2}$, for some $C>0$.

\begin{example}\label{ex1}
A \emph{$3$-uniform hypergraph} is a pair $(V,E)$ where $V$ is a set of vertices and $E\subseteq {V\choose 3}$.  A \emph{sub-hypergraph} of $(V,E)$ is a pair $(V,E')$ where $E'\subseteq E$.  Given a $3$-uniform hypergraph $G=(V,E)$ and $xy\in {V\choose 2}$, let $d^G(xy)=|\{e\in E: xy\subseteq e\}|$.  Let $\calL=\{E(x,y,z)\}$ and let $\calH$ be the hereditary $\calL$-property consisting of finite $3$-uniform hypergraphs $G=(V,E)$ with the property that for all pairs $xy\in {V\choose 2}$, $d^G(xy)\leq 1$.  It is straightforward to verify that $\VC(\calH)=1<\infty$.   

A \emph{Steiner triple system} is a $3$-uniform hypergraph $G=(V,E)$ with the property that for all $xy\in {V\choose 2}$, $d^G(xy)= 1$.  By \cite{steinersystem1, steinersystem2}, if $n\equiv 1\mod 6$ or $n\equiv 3 \mod 6$, then there exists a Steiner triple system on $n$ vertices. For all $n$ satisfying $n\equiv 1\mod 6$ or $n\equiv 3 \mod 6$, let $G_n$ be a Steiner triple system with vertex set $[n]$.  Then if $n$ is large, $e(G_n)=\frac{{n\choose 2}}{{3\choose 2}}\geq \frac{n^2}{7}$.  Consequently, the number of sub-hypergraphs of $G_n$ is at least $2^{\frac{n^2}{7}}$.  Clearly any sub-hypergraph of $G_n$ is in $\calH_n$, so $|\calH_n|\geq 2^{\frac{n^2}{7}}$.  We now show that for all sufficiently large $n$, $|\calH_n|\geq 2^{\frac{n^2}{14}}$.  Assume $n$ is sufficiently large.  If $n\equiv 1\text{ mod }6$ or $n\equiv 3\text{ mod }6$, then we have already shown $|\calH_n|\geq 2^{\frac{n^2}{7}}>2^{\frac{n^2}{14}}$.  If $n\not\equiv 1\text{ mod }6$ and $n\not\equiv 3\text{ mod }6$, then for some $i\in \{1,2\}$, one of $n-i\equiv 1\text{ mod }6$ or $n-i\equiv 3\text{ mod }6$ holds. Note $|\calH_n|\geq |\calH_{n-i}|$ because for all $([n-i],E)\in \calH_{n-i}$, we have $([n],E)\in \calH_n$.  Thus 
$$
|\calH_{n}|\geq |\calH_{n-i}|\geq 2^{\frac{(n-i)^2}{7}}\geq 2^{\frac{(n-2)^2}{7}}=2^{\frac{n^2}{7}-\frac{4n}{7}+\frac{4}{7}}\geq 2^{\frac{n^2}{14}},
$$
where the last inequality is because $n$ is large.  Thus $\VC(\calH)=\VC_1(\calH)=1$ but $|\calH_n|\geq 2^{n^2/14}$.
\end{example}

\section{Equivalence of $\VC_{\ell}(\calH)=\infty$ and $\VC_{\ell}^*(\calH)=\infty$ when $\ell\geq 1$.}\label{eqsection}

In this section we prove that when $1\leq \ell $, $\VC_{\ell}(\calH)=\infty$ if and only if $\VC^*_{\ell}(\calH)=\infty$.  

\begin{theorem}\label{eqth}
For all $1\leq \ell$, $\VC_{\ell}(\calH)=\infty$ if and only if $\VC^*_{\ell}(\calH)=\infty$
\end{theorem}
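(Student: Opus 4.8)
The plan is to prove the two directions separately. One direction is immediate: since for each formula $\varphi$ and each $(\ell,|\ybar|)$-box $\mathbb{A}$ of height $m$, having $|S_{\varphi,m}(\mathbb{A},\rho)|=2^{m^{\ell+1}}$ for some $\rho$ forces (via Observation \ref{keyob}, together with the trivial bound $|S_\varphi(\mathbb{A})|\le 2^{m^\ell}$) that $|S_\varphi(\mathbb{A})|=2^{m^\ell}$, i.e. $\varphi$ shatters $\mathbb{A}$. Hence $\VC_\ell^*(\varphi,\calH)\le \VC_\ell(\varphi,\calH)$ for every $\varphi$, and so $\VC_\ell^*(\calH)=\infty$ implies $\VC_\ell(\calH)=\infty$. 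The content is entirely in the converse: assuming $\VC_\ell(\calH)=\infty$, I want to produce, for arbitrarily large $m$, a formula $\varphi$, an $(\ell,|\ybar|)$-box $\mathbb{A}$ of height $m$, and a $\rho\in S^\emptyset_{2|\xbar|}(\mathbb{A})$ with $|S_{\varphi,m}(\mathbb{A},\rho)|=2^{m^{\ell+1}}$.

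\textbf{The converse: setup.} Fix a quantifier-free $\varphi(\xbar;\ybar)$ with $\VC_\ell(\varphi,\calH)=\infty$, and let $m$ be the height we want to reach; choose a much larger $K$. By definition of $\VC_\ell$ there is $\calM_0\in\calH$ and an $(\ell,|\ybar|)$-box $\mathbb{A}$ of height $K$ in $M_0$ that $\varphi$ shatters, so $|S_\varphi(\mathbb{A})|=2^{K^\ell}$ — this is exactly the hypothesis of Lemma \ref{shlem} applied at height $K$. The catch is that Lemma \ref{shlem}(d) requires $|S_{\varphi,K}(\mathbb{A},\rho)|=2^{K^{\ell+1}}$ for some equality type $\rho$, which is essentially what we are trying to prove; so I cannot simply invoke it. Instead, the key idea is to first \emph{replace $\xbar$ by a longer tuple and $\varphi$ by a related formula} so that the $m$ distinct realizations we need can be built from a single realization of a large type, using that $\mathbb{A}$ lives in a finite structure where we control multiplicities via Lemma \ref{shlem}(c). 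Concretely: pass to a substructure realizing, for each of the $2^{K^\ell}$ types in $S_\varphi(\mathbb{A})$, at least $N\gg m$ pairwise-distinct realizations (Lemma \ref{shlem}(c) with a sub-box, or more simply: since $\calH$ is hereditary and closed under isomorphism, take many disjoint isomorphic copies of $\calM_0$ glued along $A$ — being careful that the gluing stays in $\calH$, which is where one needs that $\calH$ has substructures but not necessarily amalgamation, so the right move is to work inside one sufficiently rich $\calM\in\calH$ obtained from the Sauer–Shelah-type counting). Then shrink $\mathbb{A}$ to a sub-box $\mathbb{A}'$ of height $m$: by Lemma \ref{shlem}(b), $\varphi$ still shatters $\mathbb{A}'$, so $|S_\varphi(\mathbb{A}')|=2^{m^\ell}$, and by (c) every type over $\mathbb{A}'$ has $\ge N$ realizations in the chosen structure.

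\textbf{The converse: the combinatorial core.} Now the task reduces to: given a single structure $\calM$ with $N\gg m$ pairwise-distinct realizations of each of the $2^{m^\ell}$ $\varphi$-types over $\mathbb{A}'$, show that \emph{every} tuple $(p_1,\dots,p_m)\in S_\varphi(\mathbb{A}')^m$ can be simultaneously realized by pairwise-distinct $\abar_1,\dots,\abar_m$ all satisfying a common equality type $\rho$ over $\mathbb{A}'$. Since $N\gg m\cdot 2^{m^\ell}$ we can greedily pick, for each type, $m$ realizations, all $m\cdot 2^{m^\ell}$ of them pairwise distinct (as tuples — note distinct tuples may share coordinates, but by a further pigeonhole/indiscernibility pass using Lemma \ref{indlem}(b) we may assume the chosen realizations form an indiscernible set over $\mathbb{A}'$ in the language of equality, giving a well-defined $\rho$). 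With that indiscernibility in hand, any selection $\abar_1\in(\text{realizations of }p_1),\dots,\abar_m\in(\text{realizations of }p_m)$ that is injective as a selection realizes $p_1\cup\dots\cup p_m\cup\bigcup_{i\ne j}\rho(\abar_i,\abar_j)$, so every element of $S_\varphi(\mathbb{A}')^m$ lands in $S_{\varphi,m}(\mathbb{A}',\rho)$, i.e. $|S_{\varphi,m}(\mathbb{A}',\rho)|=2^{m^{\ell+1}}$. Since $m$ was arbitrary, $\VC_\ell^*(\varphi,\calH)=\infty$, hence $\VC_\ell^*(\calH)=\infty$.

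\textbf{Main obstacle.} The delicate point is \emph{not} the counting but ensuring all objects stay inside $\calH$ and that the $m\cdot 2^{m^\ell}$ chosen realizations can be made pairwise distinct \emph{and} equality-indiscernible over $\mathbb{A}'$ simultaneously — this is where one must be careful to first blow up multiplicities enough (the $N\gg m\cdot 2^{m^\ell}\cdot 2^{\binom{|\xbar|}{2}}$ threshold from Lemma \ref{indlem}(b)) before extracting the indiscernible subfamily, and to verify that restricting to a substructure containing $A'$, the chosen realizations, and nothing else keeps membership in $\calH$ by heredity. In other words, the real work is an honest rerun of the bookkeeping in the proof of Theorem \ref{th5} / Lemma \ref{shlem}(d), but with the roles reversed: there one \emph{used} a large $S_{\varphi,m}(\mathbb{A}',\rho)$ to count structures; here one must \emph{manufacture} such a $\rho$ from shattering alone.
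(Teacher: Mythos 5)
Your direction ``$\VC_\ell^*(\calH)=\infty \Rightarrow \VC_\ell(\calH)=\infty$'' is not quite complete as written: the fact that $|S^{\calH}_\varphi(\mathbb{A})|=2^{m^\ell}$ says that every $\varphi$-type over $\mathbb{A}$ is realized in \emph{some} $\calM\in\calH$, which is weaker than $\VC_\ell(\varphi,\calH)\geq m$, which requires a \emph{single} $\calM\in\calH$ to shatter a box of height $m$.  The paper handles this by shrinking $\mathbb{A}$ to a sub-box of height $d$ and invoking Lemma~\ref{shlem}(d), which explicitly produces one $\calM\in\calH$ realizing all the relevant types.  This is a patchable oversight, but your appeal to ``Observation~\ref{keyob}'' alone does not close it.

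The more serious gap is in your hard direction.  You propose: (i) shrink $\mathbb{A}$ to $\mathbb{A}'$ and use Lemma~\ref{shlem}(b),(c) to get $N\gg m$ realizations in a single $\calM$ of each of the $2^{m^\ell}$ $\varphi$-types over $\mathbb{A}'$; then (ii) collect $m$ realizations per type and ``by a further pigeonhole/indiscernibility pass using Lemma~\ref{indlem}(b) assume the chosen realizations form an indiscernible set over $\mathbb{A}'$ in equality.''  Step (ii) is where this breaks: Lemma~\ref{indlem}(b) only returns \emph{some} equality-indiscernible subfamily, with no control over which $\varphi$-types that subfamily realizes over $\mathbb{A}'$.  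In the worst case the extracted indiscernible family consists entirely of realizations of a single $p\in S_\varphi(\mathbb{A}')$, at which point you have lost the shattering and cannot produce $|S_{\varphi,m}(\mathbb{A}',\rho)|=2^{m^{\ell+1}}$.  Enlarging $N$ does not help, because the loss happens in the choice of which tuples survive, not in their number.  The paper's proof resolves exactly this tension by reversing the order and inserting a Sauer--Shelah step: it first extracts, over the \emph{large} box $\mathbb{A}$ of height $K$, an equality-indiscernible subfamily $\mathbb{D}''$ of the realizations (accepting that this destroys most of the type coverage over $\mathbb{A}$); it then observes that $|\mathbb{D}''|$ is still of order $2^{K^{\ell-\epsilon/5}}$, which by Theorem~\ref{CPT'} (the Sauer--Shelah analogue for $\VC_\ell$) \emph{forces} the $\varphi$-traces of $\mathbb{D}''$ to shatter some sub-box $\mathbb{A}'$ of height $n$; and only then does it shrink once more to a box $\mathbb{B}$ of height $d$ and apply Lemma~\ref{shlem}(c) to get $d$ realizations per type, with the common $\rho$ coming for free from the already-established indiscernibility.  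Your sketch mentions ``Sauer--Shelah-type counting'' but does not actually deploy Theorem~\ref{CPT'} in this crucial role of recovering shattering \emph{after} the indiscernibility extraction, and without it the argument does not go through.
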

\begin{proof}

Suppose $\VC^*_{\ell}(\calH)=\infty$.  Fix $d$.  We show $\VC_{\ell}(\calH)\geq d$.  Let $N>>d$ and choose $\varphi(\xbar;\ybar)$ such that $\VC_{\ell}^*(\varphi,\calH)=\infty$.  Then $\VC^*_{\ell}(\varphi,\calH)\geq N$ implies there an $(\ell,|\ybar|)$-box $\mathbb{A}=\prod_{i=1}^{\ell}A_i$ of height $N$ and $\rho\in S^{\emptyset}_{2|\xbar|}(\mathbb{A})$ such that $|S_{\varphi,N}(\mathbb{A},\rho)|= 2^{N^{\ell+1}}$.  Fix a sub-box $\mathbb{A}'$ of $\mathbb{A}$ of height $d$.  By Lemma \ref{shlem} parts (\ref{b'}) and (\ref{d'}), $|S_{\varphi}(\mathbb{A}')|=2^{d^{\ell}}$ and there is $\calM\in \calH$ realizing every element of $S_{\varphi,d}(\mathbb{A}',\rho{\upharpoonright}_{\mathbb{A}'})$. Consequently, $\calM$ realizes every element of $S_{\varphi}(\mathbb{A}')$.  Thus $\varphi$ shatters $\mathbb{A}'$ in $\calM$, and  $\VC_{\ell}(\calH)\geq d$.

Suppose conversely $\VC_{\ell}( \calH)=\infty$.  Fix $d\in \mathbb{N}$.  We show $\VC_{\ell}^*(\calH)\geq d$. Choose $\varphi(\xbar;\ybar)$ such that $\VC_{\ell}(\varphi,\calH)=\infty$.  Let $s=|\xbar|$, $t=|\ybar|$.  Fix $K>>n>>d, s,t$, and let $C=C(n)$, $\epsilon=\epsilon(n)>0$ be from Theorem \ref{CPT'}.  Note that $C=C(n)$ implies $K>>C$.  Since $\VC_{\ell}( \varphi,\calH)\geq K$, there is an $(\ell,|\ybar|)$-box $\mathbb{A}$ of height $K$ and $\calM\in \calH$ such that $\varphi(\xbar;\ybar)$ shatters $\mathbb{A}$ in $\calM$. Let $\mathbb{D}\subseteq M^{|\xbar|}$ contain one realization of each element of $S_{\varphi}(\mathbb{A})$, and let $A$ be the underlying set of $\mathbb{A}$.  Note $|\mathbb{D}|=2^{K^{\ell}}$.  By Lemma \ref{shlem} part (a), $|A|\leq Kt$. Combining this with Lemma \ref{indlem} part (a) yields that $|S^{\emptyset}_s(\mathbb{A})|\leq 2^{s\choose 2}(|A|+1)^s\leq 2^{s\choose 2}(Kt+1)^s$. Consequently, there is $\nu(\xbar)\in S_s^{\emptyset}(\mathbb{A})$ such that 
$$
|\{\abar \in \mathbb{D}: \calM\models \nu(\abar)\}|\geq |\mathbb{D}|/2^{s\choose 2}(Kt+1)^s=2^{K^{\ell}}/2^{s\choose 2}(Kt+1)^s\geq C2^{K^{\ell-\epsilon/10}},
$$
where the last inequality is because $K>>C,s,t,n$ and $\ell\geq 1$.  Let $\mathbb{D}'=\{\abar\in \mathbb{D}: \calM\models \nu(\abar)\}$.  By Lemma \ref{indlem} part (b), there is $\mathbb{D}''\subseteq \mathbb{D}'$ which is an indiscernible set in the language of equality such that 
$$
|\mathbb{D}''|\geq \Big(|\mathbb{D}'|/2^{s\choose 2}\Big)^{1/2^s}\geq \frac{C^{1/2^s}2^{K^{(\ell-\epsilon/10)}/2^s}}{2^{{s\choose 2}/2^s}}\geq C2^{K^{\ell-\epsilon/5}},
$$
 where the last inequality is because $K>>C,s,n$ and $\ell\geq 1$.  Our definition of $\mathbb{D}''$ implies there is $\rho(\xbar,\ybar)\in S_{2s}^{\emptyset}(\mathbb{A})$ such that for every $\abar\neq \bbar\in\mathbb{D}''$, $\calM\models \rho(\abar,\bbar)$.  Now let $\calF=\{\varphi(\abar; \calM)\cap \mathbb{A}: \abar\in \mathbb{D}''\}$.  Since the elements of $\mathbb{D}''$ realize distinct $\varphi$-types over $\mathbb{A}$, $|\calF|\geq |\mathbb{D}''|\geq C2^{K^{\ell-\epsilon/5}}$.  Thus by Theorem \ref{CPT'}, $\calF$ shatters a sub-box $\mathbb{A}'$ of $\mathbb{A}$ of height $n$. This implies  there is a set $\mathbb{D}'''\subseteq \mathbb{D}''$ containing one realization of every element of $S_{\varphi}(\mathbb{A}')$, and $|S_{\varphi}(\mathbb{A}')|=2^{n^{\ell}}$.  Now let $\mathbb{B}$ be a sub-box of $\mathbb{A}'$ of height $d$.  By Lemma \ref{shlem} parts (\ref{b'}) and (\ref{c'}), $|S_{\varphi}(\mathbb{B})|=2^{d^{\ell}}$, and  $\mathbb{D}'''$ contains at least $d$ realizations of every element of $S_{\varphi}(\mathbb{B})$.  This implies that for every $(p_{i_1},\ldots, p_{i_d})\in S_{\varphi}(\mathbb{B})^d$, there are pairwise distinct $\abar_{i_1},\ldots, \abar_{i_d}$ in $\mathbb{D}'''$ realizing $p_{i_1}(\xbar_1)\cup \ldots \cup p_{i_d}(\xbar_d)$.  Because $\abar_{i_1},\ldots, \abar_{i_d}$ are in $\mathbb{D}'''\subseteq \mathbb{D}''$ and $\mathbb{B}\subseteq \mathbb{A}$, we have that $\calM\models \rho{\upharpoonright}_{\mathbb{B}}(\abar_{i_u},\abar_{i_v})$ for all $1\leq u\neq v\leq d$.  Thus we have shown $|S_{\varphi,d}(\mathbb{B},\rho{\upharpoonright}_{\mathbb{B}})|\geq |S_{\varphi}(\mathbb{B})^d|=2^{d^{\ell+1}}$, and consequently, $\VC_{\ell}^*(\calH)\geq \VC_{\ell}^*(\varphi,\calH)\geq d$.
\end{proof}

\section{Appendix}
In this appendix we prove Lemma \ref{indlem}.

\begin{lemma1}
Suppose $X$ is a set, $s,t\in \mathbb{N}$, $\mathbb{B}\subseteq X^t$ is finite, and $B$ is the underlying set of $\mathbb{B}$.  Then the following hold.
\begin{enumerate}[(a)]
\item $|S_s^{\emptyset}(\mathbb{B})|\leq 2^{s\choose 2}(|B|+1)^s$.
\item There is $\mathbb{B}'\subseteq \mathbb{B}$ which is an indiscernible subset of $X^t$ in the language of equality satisfying $|\mathbb{B}'|\geq \Big(|\mathbb{B}|/2^{t\choose 2}\Big)^{1/2^t}$.
\item $|B|\leq t|\mathbb{B}|$ and $|\mathbb{B}|^{1/t}\leq |B|$.
\end{enumerate}
\end{lemma1}
\begin{proof}
Every $p(x_1,\ldots, x_s)=p(\xbar)\in S_s^{\emptyset}(\mathbb{B})$ can be constructed as follows.
\begin{itemize}
\item Choose $S\subseteq {[s]\choose 2}$, and for each $ij\in S$, put $x_i=x_j$ in $p(\xbar)$ and for each $ij\notin S$, put $x_i\neq x_j$ in $p(\xbar)$.  There are at most $2^{s\choose 2}$ ways to do this.
\item For each $i\in [s]$, do one of the following. Either put $x_i\neq b$ in $p(\xbar)$ for all $b\in B$, or choose $b\in B$ and then put $x_i=b$ in $p(\xbar)$ and put $x_i\neq b'$ in $p(\xbar)$ for all $b'\in B\setminus \{b\}$.  There are at most $(|B|+1)^s$ ways to do this.
\end{itemize}
This shows $|S_s^{\emptyset}(\mathbb{B})|\leq 2^{s\choose 2}(|B|+1)^s$, so we have proved part (a).  We now prove (b).  First, by part (a), there are at most $2^{t\choose 2}$ equality types over the empty set in the variables $x_1,\ldots, x_t$, so there is $\mathbb{B}_0\subseteq \mathbb{B}$ with $|\mathbb{B}_0|\geq |\mathbb{B}|/2^{t\choose 2}$ such that all elements in $\mathbb{B}_0$ have the same equality type over the emptyset.  Let $q(\xbar)\in S^{\emptyset}_t(\emptyset)$ be such that for all $\bbar\in \mathbb{B}_0$, $q(\bbar)$ holds.  Let $B_0$ be the underlying set of $\mathbb{B}_0$.  We now build a sequence $Y_1\supseteq Y_2\supseteq \ldots \supseteq Y_t$ such that for each $1\leq i\leq t$, $|Y_i|\geq |\mathbb{B}_0|^{1/2^i}$ and $Y_t$ is an indiscernible set in the language of equality.

\underline{Step 1:} Let $B_1=\{b\in B_0: \text{ there is }(b_1,\ldots, b_t)\in \mathbb{B}_0\text{ with }b=b_1\}$.  If there is $b\in B_1$ such that $|\{(b_1,\ldots, b_t)\in \mathbb{B}_0: b_1=b\}|\geq |\mathbb{B}_0|^{1/2}$, then define $Y_1=\{(b_1,\ldots, b_t)\in B: b_1=b\}$.  Observe that in this case, every tuple in $Y_1$ has first coordinate equal to $b$ and $|Y_1|\geq |\mathbb{B}_0|^{1/2}$.  If there is no such $b$, then note 
$$
|\mathbb{B}_0|\leq \sum_{b\in B_1}|\{(b_1,\ldots, b_s)\in B_0: b_1=b\}|\leq |B_1||\mathbb{B}_0|^{1/2}.
$$
This implies $|B_1|\geq |\mathbb{B}_0|^{1/2}$.  Let $Y_1$ consist of exactly one element of the form $(b,b_2\ldots, b_t)\in B_0$ for each $b\in B_1$.  Observe that in this case, all tuples in $Y_1$ have pairwise distinct first coordinates and $|Y_1|=|B_1|\geq |\mathbb{B}_0|^{1/2}$.  In both cases, we have defined $Y_1$ so that $|Y_1|\geq |\mathbb{B}_0|^{1/2}$ and so that $Y_1$ is indiscernible with respect to formulas of the form $\varphi(x_1,y_1)$ in the language of equality (i.e. those which only use the variable $x_1$, $y_1$).

\underline{Step i+1:} Suppose by induction we have define $Y_1\supseteq \ldots \supseteq Y_i$ such that $|Y_i|\geq |\mathbb{B}_0|^{1/2^i}$ and such that the elements in $Y_i$ are indiscernible with respect to formulas of the form $\varphi(x_1,\ldots, x_i,y_1,\ldots, y_i)$ in the language of equality.  Let 
$$
B_{i+1}=\{b\in B_0: \text{ there is }(b_1,\ldots, b_t)\in Y_i\text{ with }b=b_{i+1}\}.
$$
If there is $b\in B_{i+1}$ such that $|\{(b_1,\ldots, b_t)\in X_i: b_{i+1}=b\}|\geq |Y_i|^{1/2}$, then define $Y_{i+1}=\{(b_1,\ldots, b_t)\in X_i: b_{i+1}=b\}$.  In this case, we have $|Y_{i+1}|\geq |Y_i|^{1/2}\geq |\mathbb{B}_0|^{1/2^{i+1}}$, and every tuple in $Y_{i+1}$ has its $(i+1)$-st coordinate equal to $b$.  If there is no such $b$, then note 
$$
|Y_i|\leq \sum_{b\in B_{i+1}}|\{(b_1,\ldots, b_t)\in B_0: b_{i+1}=b\}|\leq |B_{i+1}||Y_i|^{1/2}.
$$
This implies $|B_{i+1}|\geq |Y_i|^{1/2}\geq |\mathbb{B}_0|^{1/2^{i+1}}$.  Let $Y_{i+1}$ consist of exactly one element of the form $(b_1,\ldots, b_t)\in Y_i$ with $b_{i+1}=b$ for each $b\in B_{i+1}$.  Then all tuples in $Y_{i+1}$ have distinct $(i+1)$-st coordinates and $|Y_{i+1}|=|B_{i+1}|\geq |\mathbb{B}|^{1/2^{i+1}}$.  In both cases, $|Y_{i+1}|\geq |\mathbb{B}_0|^{1/2^{i+1}}$.  Combining the definition of $Y_{i+1}$ with the inductive hypothesis implies $Y_{i+1}$ is an indiscernible set with respect to formulas of the form $\varphi(x_1,\ldots, x_{i+1}, y_1,\ldots, y_{i+1})$ in the language of equality.

At stage $t$, we obtain $Y_t\subseteq \mathbb{B}_0$ with $|Y_t|\geq |\mathbb{B}_0|^{1/2^t}$ and which is an indiscernible set with respect to formulas of the form $\varphi(x_1,\ldots, x_t, y_1,\ldots, y_t)$ in the language of equality, i.e. $Y_t$ is an indiscernible subset of $X^t$ in the language of equality.   

For part (c), we obtain the upper bound as follows.  Given $\bbar=(b_1,\ldots, b_t)$, let $\cup \bbar=\{b_1,\ldots, b_t\}$.  Then $|B|\leq \sum_{\bbar \in \mathbb{B}}|\cup \bbar|\leq \sum_{\bbar \in \mathbb{B}} t =|\mathbb{B}|t$.  For the lower bound, observe that $\mathbb{B}\subseteq B^t$ implies $|\mathbb{B}|\leq |B|^t$, so $|\mathbb{B}|^{1/t}\leq |B|$.

\end{proof}

\bibliography{/Users/carolineterry/Desktop/science1.bib}
\bibliographystyle{amsplain}

\end{document}